\newtheorem{theorem}{Theorem}
\newtheorem{lemma}[theorem]{Lemma}
\DeclareMathAlphabet{\mathpzc}{OT1}{pzc}{m}{it}
\def\A{{\boldsymbol A}}
\def\C{{\boldsymbol C}}
\def\D{{\boldsymbol D}}
\def\I{{\boldsymbol I}}
\def\J{{\boldsymbol J}}
\def\0{{\boldsymbol 0}}
\def\FF{{\bf F}}
\def\R{\mathbb{R}}
\def\C{\mathbb{C}}
\def\rD{{\rm (D)}}
\def\rJ{{\rm (J)}}
\def\tm{{\widetilde \mu}}
\def\tz{{\widetilde z}}
\DeclareMathOperator{\erf}{erf}
\begin{document}

\title{ADI schemes for valuing European options under the Bates model}

\author{
Karel J. in 't Hout\footnote{
Department of Mathematics and Computer Science, University of Antwerp,
Middelheimlaan 1, B-2020 Antwerp, Belgium.
Email: karel.inthout@uantwerp.be}
~and\,
Jari Toivanen\footnote{
Faculty of Information Technology, PO Box 35, FI-40014 University of 
Jyv\"{a}skyl\"{a}, Finland.
Email: jari.a.toivanen@jyu.fi}
}

\date{}

\maketitle

\begin{abstract}

This paper is concerned with the adaptation of alternating direction implicit
(ADI) time discretization schemes for the numerical solution of partial
integro-differential equations (PIDEs) with application to the Bates model
in finance.
Three different adaptations are formulated and their (von Neumann) stability 
is analyzed.
Ample numerical experiments are provided for the Bates PIDE, 
illustrating the actual stability and convergence behaviour of the three
adaptations.

\medskip\noindent
{\it Keywords:}~ partial integro-differential equations, operator splitting
methods, alternating direction implicit schemes, stability, Bates model.
\end{abstract}

\section{Introduction}
The traditional asset price model in financial option valuation theory is 
the geometric Brownian motion. 
It is well-known that this model assumes constant volatility while the market 
prices of options clearly indicate varying volatility. 
For the valuation of options with long maturities, stochastic volatility models 
like the Heston stochastic volatility model \cite{Heston93} are a common means 
to introduce such variability. 
For short maturities, however, pure Brownian motion based models 
such as the Heston model often require 
excessively large volatilities to explain the market prices of options. 
A modern way to resolve this is to incorporate jumps in the asset price model,
like the classical Merton jump-diffusion model \cite{Merton76} does. 
The Bates model \cite{Bates96} combines the Merton model and the Heston model. 
This asset price model includes both jumps and stochastic volatility and it is
therefore popular for valuing options with short as well as long maturities.

Under the Bates model, a two-dimensional parabolic partial integro-differential 
equation (PIDE) can be derived for the values of European-style options with the 
spatial variables representing the underlying asset price and its instantaneous 
variance \cite{Cont04}.
The differential operator is of the convection-diffusion type. 
The integral operator, which stems from the jumps, couples all option values 
in the asset price direction. 

This paper deals with the numerical solution of the Bates PIDE.
We follow the method of lines approach, where the PIDE is first discretized 
in the spatial variables and the resulting semidiscrete system of ordinary 
differential equations (ODEs) is solved by applying a suitable implicit time 
discretization scheme.
Due to the nonlocal nature of the integral operator, semidiscretization 
leads to a large, dense system matrix.
For an implicit time discretization scheme one thus faces two challenges: 
the efficient treatment of the two-dimensional convection-diffusion part 
and the efficient treatment of the dense integral part. 
In this paper we focus on operator splitting methods, which handle these 
two parts separately in an effective manner.
The following reviews methods proposed in the literature.

For the time discretization of the two-dimensional partial differential 
equation (PDE) arising from the Heston model, various {\it alternating 
direction implicit (ADI)} schemes were studied in \cite{intHout10}. 
ADI and other directional splitting methods, see e.g.~\cite{Ikonen08},
are highly efficient. 
In each time step they yield a sequence of unidirectional linear systems, 
which can be solved very fast using LU factorization.
Since the original papers \cite{Douglas55,Douglas56,Peaceman55}, the literature 
on ADI schemes for time-dependent convection-diffusion equations is extensive. 
In financial option valuation, correlations between the underlying 
stochastic processes (asset price, variance) give rise to mixed spatial-derivative 
terms in the pertinent PDEs.
First-order ADI schemes have been investigated for such PDEs in \cite{McKee70,McKee96}
and second-order ADI schemes in \cite{Craig88,Haentjens12,intHout10,intHout17b,intHout11,
intHout13,intHout07,intHout09b,Mishra16}, where the mixed derivative part is always 
treated in an explicit fashion.

Under the Merton model a one-dimensional PIDE for European option values holds. 
Application of a classical implicit time discretization scheme such as 
Crank--Nicolson or BDF2 leads in every time step to the solution of a linear 
system with a dense matrix. 
An iterative method requiring only multiplications by a dense matrix and 
solutions to tridiagonal systems can often solve these linear systems with 
a few iterations \cite{Almendral05,dHalluin05,Salmi11,Tavella00}.
Alternatively, a splitting method for the time discretization of the Merton 
PIDE was proposed in \cite{Andersen00}.
This method, of the Peaceman--Rachford type, treats the integral part in an
implicit manner and employs FFT to reduce computational cost.
An {\it implicit-explicit (IMEX)} method that treats the sparse diffusion
part implicitly and the dense integral part explicitly was investigated in 
\cite{Cont05}.
The pertinent method is only first-order. 
Second-order IMEX methods such as the IMEX midpoint and IMEX--CNAB 
(Crank--Nicolson Adams--Bashforth) schemes were subsequently
investigated in \cite{Feng08,Kwon11a,Salmi14b} and IMEX schemes of the 
Runge--Kutta type were examined in \cite{Briani07}.

The IMEX--CNAB method has been applied to the Bates PIDE in \cite{Salmi14a}. 
For the solution of the linear system in each time step, both LU 
factorization and an algebraic multigrid method were considered.
An adaptive finite difference discretization combined with the
IMEX--CNAB method and LU factorization was constructed in 
\cite{vonSydow15}.
In \cite{Ballestra16,Ballestra10} a first-order directional splitting 
method was used together with Richardson extrapolation with the aim to 
obtain second-order accuracy.
Directional splitting methods of the Strang type were studied in 
\cite{Chiarella09,Toivanen10a}.
In \cite{Itkin11} the authors employed Strang splitting between the 
convection-diffusion and integrals parts and applied a second-order 
ADI scheme to solve the two-dimensional convection-diffusion substeps 
in this approach; see also \cite{Itkin17}.
A novel, direct adaptation of ADI schemes to two-dimensional PIDEs has 
recently been proposed in \cite{Kaushansky17}.
Here the integral and mixed derivative parts are treated jointly in an 
explicit fashion.

For more information on operator splitting methods like ADI and IMEX
schemes in finance we refer to \cite{intHout17a,intHout16}.
A good general reference is \cite{Hundsdorfer03}.

In this paper we consider the adaptation of ADI schemes for the time
discretization of two-dimensional PIDEs with the Bates model as the
leading example.
We formulate three types of adaptations: two of these are novel and 
one agrees with that from \cite{Kaushansky17}.
The obtained operator splitting methods are all of second-order and
treat both the integral and mixed derivative parts in an explicit
manner.
At each time step they require only one or two multiplications 
with the dense matrix corresponding to the integral part.
Hence, the resulting methods are computationally very efficient.

An outline of this paper is as follows. 
Section~\ref{BatesSec} describes the Bates model, the PIDE for 
European option values, and its spatial discretization.  
In Section~\ref{ADISec} the three adaptations of ADI schemes to 
PIDEs are formulated and their stability is analyzed.
Numerical experiments for these adaptations applied to the Bates
PIDE are presented in Section~\ref{ExampleSec}. 
The conclusions end the paper in Section~\ref{ConcSec}.

\setcounter{equation}{0}
\setcounter{theorem}{0}
\section{Bates model and its semidiscretization}\label{BatesSec}
We consider in this paper the Bates model \cite{Bates96}, which merges the Heston
stochastic volatility model \cite{Heston93} and the Merton jump-diffusion
model \cite{Merton76} for the asset price $S_\tau \ge 0$.
Here the instantaneous variance $V_\tau \ge 0$ follows a CIR process
with mean-reversion level \mbox{$\eta > 0$}, mean-reversion rate $\kappa > 0$
and volatility-of-variance $\sigma > 0$. The underlying Brownian motions for $S_\tau$ 
and $V_\tau$ have correlation $\rho \in [-1,1]$. The jump process is a compound 
Poisson process with intensity $\lambda > 0$. The probability of a jump from 
$S_\tau$ to $S_\tau Y$ with random variable $Y>0$ is defined by the log-normal 
probability density function
\begin{equation*}
f(y)=\frac{1}{y \delta\sqrt{2\pi}}e^{-\frac{(\log y-\gamma)^2}{2\delta^2}} \quad (y>0),
\end{equation*}
where $\gamma$ and $\delta$ are given real constants with $\delta>0$ that are equal 
to the mean and standard deviation, respectively, of the normal random variable $\log (Y)$.
Let $r\ge 0$ be the risk-free interest rate and let $\varepsilon$ denote the 
mean relative jump size, 
given by $\varepsilon=e^{\gamma+\delta^2/2}-1$.
Consider an European-style option with maturity time $T$.
Under the Bates model, if $S_{T-t}=s$ and $V_{T-t}=v$, then the option value $u(s,v,t)$ 
satisfies the PIDE
\begin{equation}\label{L1}
\begin{split}
\frac{\partial u}{\partial t} &{} =
\frac{1}{2} s^2 v \frac{\partial^2 u}{\partial s^2}
+ \rho \sigma s v \frac{\partial^2 u}{\partial s \partial v}
+ \frac{1}{2} \sigma^2 v \frac{\partial ^2u}{\partial v^2}
+ (r - \lambda \varepsilon) s \frac{\partial u}{\partial s}
+ \kappa ( \eta - v ) \frac{\partial u}{\partial v} \\
&{} - (r + \lambda) u
+ \lambda \int_{0}^\infty u(sy,v,t) f(y) dy
\end{split}
\end{equation}
whenever $s>0$, $v>0$, $0 < t \le T$.
The initial condition for \eqref{L1} is
\begin{equation}\label{initial}
u(s,v,0) = \phi(s),
\end{equation}
where $\phi$ denotes the pay-off function, which specifies the value of the option at
expiry. In this paper we shall consider vanilla put options.
The pay-off function is then $\phi(s) = \max \{ K - s,\, 0 \}$ with given
strike price $K$.

For feasibility of the numerical solution, the unbounded spatial domain 
is truncated 
to $[0,\, S_{\max}] \times [0,\, V_{\max}]$ with sufficiently large $S_{\max}$ and 
$V_{\max}$ chosen such that the error caused by this truncation is negligible. 
We pose the Dirichlet and Neumann boundary conditions
\begin{equation}\label{BCs}
\begin{split}
u(0,v,t) &= e^{-rt} K \phantom{0}~~ (0 \le v \le V_{\max},\; 0 < t \le T), \\
\phantom{\frac{\partial u}{\partial s}} u(S_{\max},v,t) &= 0 \phantom{e^{-rt} K}~~
(0 \le v \le V_{\max},\; 0 < t \le T), \\
\frac{\partial u}{\partial v} (s,V_{\max},t) &= 0 \phantom{e^{-rt} K}~~
(0 \le s \le S_{\max},\; 0 < t \le T),
\end{split}
\end{equation}
and at the boundary $v=0$ it is assumed that the PIDE \eqref{L1} holds, compare 
\cite{Ekstrom10}.

For the spatial discretization on the computational domain $[0,\, S_{\max}] \times [0,\, V_{\max}]$
a smooth, nonuniform Cartesian grid is chosen with meshes in the $s$- and $v$-directions of the type 
considered in \cite{Haentjens12}. 
This grid has a large fraction of points in the neighbourhood of the important location $(s,v)=(K,0)$. 
Let the grid points in the $s$-direction be denoted by $0 = s_0 < s_1 < \cdots < s_{m_1} = S_{\max}$
and the grid points in the $v$-direction by $0 = v_0 < v_1 < \cdots < v_{m_2} = V_{\max}$. 
Let the mesh widths $\Delta s_i = s_i - s_{i-1}$, $\Delta v_j = v_j - v_{j-1}$ and write $u_{i,j}$ 
for the value of $u$ at grid point $(s_i,v_j)$.
For the partial derivatives of $u$ with respect to $s$ we apply the standard central finite
differences
\begin{equation}\label{cfd1}
\frac{\partial u_{i,j}}{\partial s}
\approx
\frac{-\Delta s_{i+1}}{\Delta s_i (\Delta s_i + \Delta s_{i+1})}\, u_{i-1,j}
+ \frac{\Delta s_{i+1} - \Delta s_i}{\Delta s_i \Delta s_{i+1}}\, u_{i,j}
+ \frac{\Delta s_i}{(\Delta s_i + \Delta s_{i+1}) \Delta s_{i+1}}\, u_{i+1,j}
\end{equation}
and
\begin{equation}\label{cfd2}
\frac{\partial^2 u_{i,j}}{\partial s^2}
\approx
\frac{2}{\Delta s_i (\Delta s_i + \Delta s_{i+1})}\, u_{i-1,j}
- \frac{2}{\Delta s_i \Delta s_{i+1}}\, u_{i,j}
+ \frac{2}{(\Delta s_i + \Delta s_{i+1}) \Delta s_{i+1}}\, u_{i+1,j}\,.
\end{equation}
For the partial derivatives of $u$ with respect to $v$ the analogous finite differences
are considered, with the exception of the boundary $v=0$, where we apply the forward formula
\begin{equation}\label{ufd}
\frac{\partial u_{i,0}}{\partial v}
\approx
-\frac{2{\Delta v}_{1}+{\Delta v}_{2}}{{\Delta v}_{1}({\Delta v}_{1}+{\Delta v}_{2})}\, u_{i,0}
+ \frac{{\Delta v}_{1}+{\Delta v}_{2}}{{\Delta v}_{1}{\Delta v}_{2}}\, u_{i,1}
- \frac{{\Delta v}_{1}}{({\Delta v}_{1}+{\Delta v}_{2}){\Delta v}_{2}}\, u_{i,2}\,.
\end{equation}
For the mixed derivative $\partial^2 u/\partial s\partial v$ the central nine-point formula 
is used that is obtained by successive application of the standard central finite differences 
for the first derivative in the $s$- and $v$-directions.

For the discretization of the integral term in \eqref{L1}, we employ linear 
interpolation for $u$ between consecutive grid points in the $s$-direction,
and $u(s,v,t)$ is approximated to be zero whenever $s\ge S_{\max}$. 
After some calculations, this leads to the quadrature rule
\begin{equation}
\begin{split}
& \int_{0}^\infty u(s_i y,v_j,\cdot) f(y) dy \\
& \approx \sum_{k=0}^{m_1-1}
\frac{1}{2 \Delta s_{k+1}} \left[ 
\left( c_{1,i,k} s_i 
- c_{0,i,k} s_{k+1} \right) u_{k,j}
+ \left( c_{0,i,k} s_k 
- c_{1,i,k} s_i \right) u_{k+1,j}
\right],
\end{split}
\end{equation}
where
\begin{equation}
c_{d,i,k} =
e^{d(\gamma + \delta^2/2)} \left[
\erf \left(\frac{\gamma + d\delta^2 - \log (s_{k+1}/s_i)}{\delta \sqrt{2}}\right) -
\erf \left(\frac{\gamma + d\delta^2 - \log (s_k/s_i)}{\delta \sqrt{2}}\right)
\right]
\end{equation}
for $d=0, 1$ with $\erf(\cdot)$ denoting the error function.
This same approximation of the integral term was used in \cite{Salmi11}, 
for example.
Here we adopt the convention $\log(0) = -\infty$ and $\erf(\infty)=1$.

Collecting the semidiscrete approximations to the grid point values 
$u(s_i,v_j,t)$ for $1\le i\le m_1-1$, $0\le j\le m_2$ into a vector $U(t)$ gives 
rise to a system of ODEs of 
the form
\begin{equation}\label{ODEsystem}
\dot{U}(t) = \A U(t) + G(t),~~0 < t \le T.
\end{equation}
Here $\A$ is a given $m\times m$-matrix and $G(t)$ is a given $m$-vector
with $m=(m_1-1)(m_2+1)$.
The vector $G(t)$ depends on the Dirichlet boundary data in \eqref{BCs}.
Let $\I$ denote the $m\times m$-identity matrix.
The matrix $\A$ can be written as 
\begin{equation}\label{ODE}
\A = \D - (r+\lambda) \I + \lambda \J,
\end{equation}
where $\D$ represents the convection-diffusion part of \eqref{L1} and $\J$ 
represents the integral. 
The matrix $\D$ is sparse, whereas $\J$ is a block diagonal matrix
with $m_2+1$ dense diagonal blocks of size $(m_1-1)\times (m_1-1)$.
It can be shown that all entries of $\J$ are nonnegative and all its row 
sums are bounded from above by one. In particular this implies that 
the spectrum of $\J$ belongs to the complex unit disk.

The ODE system \eqref{ODEsystem} is complemented with an initial vector $U(0)$
that is given by the values of the pay-off function $\phi$ at the spatial grid 
points where,
in view of the nonsmoothness of $\phi$ at the strike $K$, the values 
at the grid points $(s_i,v_j)$ with $s_i$ the point lying closest to $K$ are 
replaced by the cell average value
\begin{equation}\label{cellaver}
\frac{1}{\Delta s_{i+1/2}}
\int_{s_{i-1/2}}^{s_{i+1/2}} \phi(s)\, ds
\end{equation}
with
$
s_{i-1/2} = \tfrac{1}{2}(s_{i-1}+s_i),
s_{i+1/2} = \tfrac{1}{2}(s_i+s_{i+1}),
\Delta s_{i+1/2} = s_{i+1/2}-s_{i-1/2}.
$

\setcounter{equation}{0}
\setcounter{theorem}{0}
\section{Operator splitting methods}\label{ADISec}
\subsection{Adaptation of ADI schemes to PIDEs}\label{adaptations}
For the efficient temporal discretization of the semidiscrete system \eqref{ODEsystem}
we study in this paper operator splitting methods.
To this purpose, the matrix $\A$ is decomposed into four matrices,
\begin{equation}\label{splitA}
\A = \A_0^\rJ + \A_0^\rD + \A_1 + \A_2.
\end{equation}
Here $\A_1$, respectively $\A_2$, corresponds to the finite difference discretization 
of all spatial derivatives in the $s$-direction, respectively $v$-direction, minus half 
of the $(r+\lambda) \I$ matrix.
Next, $\A_0^\rD$ represents the finite difference discretization of the mixed derivative 
term in \eqref{L1} and $\A_0^\rJ = \lambda \J$ represents the finite difference discretization 
of the integral term.
We note that incorporating $(r+\lambda) \I$ in the above manner turns out to be beneficial 
for stability.
The vector $G(t)$, containing the boundary contributions, is decomposed analogously to $\A$.
It is convenient to define for $0\le t\le T$ and $V\in\R^m$,
\begin{equation*}
\begin{split}
\FF_0^\rJ (t,V) &= \A_0^\rJ V + G_0^\rJ (t),\\\\
\FF_0^\rD (t,V) &= \A_0^\rD V + G_0^\rD (t),\\\\
\FF_1 (t,V) &= \A_1 V + G_1 (t),\\\\
\FF_2 (t,V) &= \A_2 V + G_2 (t),
\end{split}
\end{equation*}
and write $\FF = \FF_0 + \FF_1 + \FF_2$, $\FF_0 = \FF_0^\rJ + \FF_0^\rD$
and $\FF^\rD = \FF_0^\rD + \FF_1 + \FF_2$.

The basis for the operator splitting approach under investigation in this paper is the 
class of ADI schemes, which is well-established in the
literature for option valuation PDEs, without integral term.
As a prominent representative of this class we select the {\it Modified Craig--Sneyd (MCS) 
scheme}, which was introduced in \cite{intHout09b}.

Let $\theta>0$ be a given parameter. 
Let step size $\Delta t= T/N$ with given integer $N\ge 1$ and temporal grid points $t_n = n \Delta t$ 
for $n=0,1,\ldots,N$.
If the integral term is absent, then $\FF_0 = \FF_0^\rD$ and the MCS scheme generates successively, 
in a one-step manner, approximations $U_n$ to $U(t_n)$ for $n=1,2,\ldots,N$ by
\begin{equation}\label{MCS1}
\left\{\begin{array}{lll}
Y_0 = U_{n-1}+\Delta t\, \FF(t_{n-1},U_{n-1}), \\\\
Y_j = Y_{j-1}+\theta\Delta t\, (\FF_j(t_n,Y_j)-\FF_j(t_{n-1},U_{n-1}))
\quad (j=1,2), \\\\
\widehat{Y}_0 = Y_0+\theta\Delta t\, (\FF_0(t_n,Y_2)-\FF_0(t_{n-1},U_{n-1})),\\\\
\widetilde{Y}_0 = \widehat{Y}_0+(\frac{1}{2}-\theta)\Delta t\, (\FF(t_n,Y_2)-\FF(t_{n-1},U_{n-1})), \\\\
\widetilde{Y}_j = \widetilde{Y}_{j-1}+\theta\Delta t\, (\FF_j(t_n,\widetilde{Y}_j)-\FF_j(t_{n-1},U_{n-1}))
\quad (j=1,2), \\\\
U_n = \widetilde{Y}_2.
\end{array}\right.
\end{equation}
The MCS scheme has classical order of consistency (that is, for fixed nonstiff ODEs) equal to
two for any parameter value $\theta$.
The particular choice $\theta= \frac{1}{2}$ yields the Craig--Sneyd (CS) scheme, 
introduced in \cite{Craig88}.

In the MCS scheme the $\FF_0$ part is treated in an explicit fashion and the  
$\FF_1$ and $\FF_2$ parts are treated in an implicit fashion.
In each step of the scheme four linear systems need to be solved, involving the two matrices
$\I - \theta \Delta t \A_j$ for $j=1,2$.
Since these two matrices are essentially tridiagonal (up to permutation), 
the linear systems can be solved 
very efficiently by using $LU$ factorizations, which can be computed upfront.
Notice further that $\widehat{Y}_0$ and $\widetilde{Y}_0$ can be combined in 
\eqref{MCS1}, so that just two evaluations of $\FF_0$ are required per time step.

We study three adaptations of the MCS scheme to PIDEs.
The {\it first adaptation}\, is straightforward. It consists of 
\eqref{MCS1} with $\FF_0 = \FF_0^\rJ + \FF_0^\rD$ as defined above.
The integral term is then conveniently treated in an explicit way, along with the mixed
derivative term.
This type of adaptation has recently been proposed in \cite{Kaushansky17}, where the (related)
Hundsdorfer--Verwer ADI scheme, instead of the MCS scheme, was considered.

The {\it second adaptation}\, of the MCS scheme to PIDEs is obtained by treating the integral term 
in a one-step, explicit, second-order fashion directly at the beginning of each time step:
\begin{equation}\label{MCS2}
\left\{\begin{array}{lll}
X_0 = U_{n-1}+\Delta t\, \FF(t_{n-1},U_{n-1}), \\\\
Y_0 = X_0 + \frac{1}{2}\Delta t\,(\FF_0^\rJ(t_n,X_0)-\FF_0^\rJ(t_{n-1},U_{n-1})),\\\\
Y_j = Y_{j-1}+\theta\Delta t\, (\FF_j(t_n,Y_j)-\FF_j(t_{n-1},U_{n-1}))
\quad (j=1,2), \\\\
\widehat{Y}_0 = Y_0+\theta\Delta t\, (\FF_0^\rD(t_n,Y_2)-\FF_0^\rD(t_{n-1},U_{n-1})),\\\\
\widetilde{Y}_0 = \widehat{Y}_0+(\frac{1}{2}-\theta)\Delta t\, (\FF^\rD(t_n,Y_2)-\FF^\rD(t_{n-1},U_{n-1})), \\\\
\widetilde{Y}_j = \widetilde{Y}_{j-1}+\theta\Delta t\, (\FF_j(t_n,\widetilde{Y}_j)-\FF_j(t_{n-1},U_{n-1}))
\quad (j=1,2), \\\\
U_n = \widetilde{Y}_2.
\end{array}\right.
\end{equation}
We mention that the scheme defined by the first three lines of \eqref{MCS2} 
followed by $U_n=Y_2$ has recently been studied, in a different context, in \cite{Arraras17} 
for problems without integral term.

In the {\it third adaptation}\, of the MCS scheme to PIDEs, the integral term is treated in a two-step, 
explicit, second-order fashion directly at the start of each time step:
\begin{equation}\label{MCS3}
\left\{\begin{array}{lll}
X_0 = U_{n-1}+\Delta t\, \FF^\rD(t_{n-1},U_{n-1}), \\\\
Y_0 = X_0 + \frac{3}{2}\Delta t\,\FF_0^\rJ(t_{n-1},U_{n-1}) - \frac{1}{2}\Delta t\,\FF_0^\rJ(t_{n-2},U_{n-2}),\\\\
Y_j = Y_{j-1}+\theta\Delta t\, (\FF_j(t_n,Y_j)-\FF_j(t_{n-1},U_{n-1}))
\quad (j=1,2), \\\\
\widehat{Y}_0 = Y_0+\theta\Delta t\, (\FF_0^\rD(t_n,Y_2)-\FF_0^\rD(t_{n-1},U_{n-1})),\\\\
\widetilde{Y}_0 = \widehat{Y}_0+(\frac{1}{2}-\theta)\Delta t\, (\FF^\rD(t_n,Y_2)-\FF^\rD(t_{n-1},U_{n-1})), \\\\
\widetilde{Y}_j = \widetilde{Y}_{j-1}+\theta\Delta t\, (\FF_j(t_n,\widetilde{Y}_j)-\FF_j(t_{n-1},U_{n-1}))
\quad (j=1,2), \\\\
U_n = \widetilde{Y}_2.
\end{array}\right.
\end{equation}
For starting \eqref{MCS3}, we define $U_1$ by \eqref{MCS1}.

It can be shown that each of the adaptations \eqref{MCS1}, \eqref{MCS2}, \eqref{MCS3} of the MCS 
scheme retains classical order of consistency equal to two for any value $\theta$.
The mixed derivative and integral terms are always handled in an explicit manner.
In \eqref{MCS1} and \eqref{MCS2} the integral term is dealt with in an explicit trapezoidal rule 
fashion, whereas in \eqref{MCS3} it is treated in a two-step Adams--Bashforth fashion.
An advantage of the adaptation \eqref{MCS3} is that only one multiplication with the dense matrix 
$\A_0^\rJ$ is required per time step, instead of two for \eqref{MCS1} and \eqref{MCS2}.
Treating the integral term in a two-step Adams--Bashforth fashion has recently been advocated in 
\cite{Salmi14b,Salmi14a} where IMEX multistep schemes, without directional splitting, were 
considered for option valuation PIDEs.

In the following section we analyze the stability of the three adaptations of the MCS 
scheme.

\subsection{Stability analysis}\label{StabAnal}
Let $\lambda_0$, $\mu_0$, $\mu_1$, $\mu_2$ denote any given complex numbers representing eigenvalues 
of the matrices $\A_0^\rJ$, $\A_0^\rD$, $\A_1$, $\A_2$, respectively.
For the stability analysis of the adaptations of the MCS scheme to PIDEs formulated in Section
\ref{adaptations} we consider the linear scalar test equation
\begin{equation}\label{testeqn}
\dot{U}(t) = (\lambda_0+\mu_0+\mu_1+\mu_2)\, U(t).
\end{equation}
Let $w_0 = \lambda_0\, \Delta t$ and $z_j = \mu_j\, \Delta t$ for $j=0,1,2$.
Application of the three adaptations \eqref{MCS1}, \eqref{MCS2}, \eqref{MCS3} to test equation 
\eqref{testeqn} yields the linear recurrence relations
\begin{equation}\label{rR}
U_n = R(w_0,z_0,z_1,z_2)\, U_{n-1}\,,
\end{equation}
\begin{equation}\label{rS}
U_n = S(w_0,z_0,z_1,z_2)\, U_{n-1}\,,
\end{equation}
and
\begin{equation}\label{rT}
U_n = T_1(w_0,z_0,z_1,z_2)\, U_{n-1} + T_0(w_0,z_0,z_1,z_2)\, U_{n-2}\,,
\end{equation}
respectively, with
\begin{eqnarray}
R(w_0,z_0,z_1,z_2)&=&\label{R}
1+\frac{z}{p}+\theta\frac{( w_0+z_0)z}{p^2}+(\tfrac{1}{2}-\theta)\frac{z^2}{p^2}\,,\\
S(w_0,z_0,z_1,z_2)&=&\label{S}
1+( 1+\tfrac{1}{2}w_0)\left(\frac{z}{p}+\theta\frac{z_0 z}{p^2}+(\tfrac{1}{2}-\theta)\frac{(z_0+z_1+z_2)z}{p^2}\right),~~~~~\\
T_1(w_0,z_0,z_1,z_2)&=&\label{T1}
1+(z+\tfrac{1}{2}w_0)\left(\frac{1}{p}+\theta\frac{z_0}{p^2}+(\tfrac{1}{2}-\theta)\frac{z_0+z_1+z_2}{p^2}\right),\\
T_0(w_0,z_0,z_1,z_2)&=&\label{T2}
-\tfrac{1}{2}w_0\left(\frac{1}{p}+\theta\frac{z_0}{p^2}+(\tfrac{1}{2}-\theta)\frac{z_0+z_1+z_2}{p^2}\right),
\end{eqnarray}
where
\begin{equation*}\label{zp}
z = w_0+z_0+z_1+z_2 
\quad\text{and}\quad 
p = (1-\theta z_1)(1-\theta z_2).
\end{equation*}
Clearly, if $w_0=0$, then $R$, $S$, $T_1$ reduce to the same rational function and $T_0$ vanishes.

We investigate the stability of the three processes \eqref{rR}, \eqref{rS}, \eqref{rT}.
By definition of $\A_j$ it holds for $j=1,2$ that
\[
\mu_j = \tm_j - \tfrac{1}{2}(r+\lambda),
\]
where $\tm_j$ represents an eigenvalue of the matrix corresponding to the finite difference 
discretization of all spatial derivatives in the $j$-th spatial direction.
Let $\tz_j = \tm_j\, \Delta t$ for $j=1,2$.
In the stability analysis of ADI schemes for two-dimensional convection-diffusion equations with 
mixed derivative term, the following condition on $z_0$, $\tz_1$, $\tz_2$ plays a key role:
\begin{equation}\label{cond1}
|z_0|\le 2\sqrt{\Re \tz_1\Re \tz_2}
~~{\rm and}~~
\Re \tz_1\le 0,~\Re \tz_2\le 0,
\end{equation}
where $\Re$ designates the real part.
It is satisfied \cite{intHout07} in the model (von Neumann) framework, with semidiscretization by 
second-order central finite differences on uniform, Cartesian grids and periodic boundary condition.
Consider the following, related condition on $w_0$, $z_0$, $z_1$, $z_2$\,:
\begin{equation}\label{cond2}
|z_0|+|w_0|\le 2\sqrt{\Re z_1\Re z_2}
~~{\rm and}~~
\Re z_1\le -\tfrac{1}{2} |w_0|,~\Re z_2\le -\tfrac{1}{2} |w_0|.
\end{equation}
For the integral term there holds 
\[
\lambda_0 = \lambda\nu,
\] 
where $\nu\in\C$ represents an eigenvalue of the matrix $\J$.
A natural assumption on $\nu$ is
\[
|\nu|\le 1,
\]
compare Section \ref{BatesSec}.
For $w_0$, $z_0$, $z_1$, $z_2$, $\tz_1$, $\tz_2$ defined above, we have the
following useful lemma.
\begin{lemma}\label{lemma1}
If
$|\nu|\le 1$,
then\, $\eqref{cond1} \Longrightarrow \eqref{cond2}$.
\end{lemma}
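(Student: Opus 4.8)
The plan is to exploit the two structural facts that connect the unshifted diffusion eigenvalues $\tz_1,\tz_2$ appearing in \eqref{cond1} to the shifted quantities $z_1,z_2$ and the jump quantity $w_0$ appearing in \eqref{cond2}. First I would record that, by the identity $\mu_j=\tm_j-\tfrac12(r+\lambda)$ stated just above the lemma, we have $z_j=\tz_j-\tfrac12(r+\lambda)\Delta t$ and hence $\Re z_j=\Re\tz_j-\tfrac12(r+\lambda)\Delta t$ for $j=1,2$. Second, since $w_0=\lambda\nu\,\Delta t$ and $|\nu|\le 1$, the hypothesis gives $|w_0|=\lambda|\nu|\,\Delta t\le\lambda\,\Delta t\le(r+\lambda)\,\Delta t$, where the last step uses $r\ge0$. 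It is convenient to abbreviate $a=-\Re\tz_1\ge0$ and $b=-\Re\tz_2\ge0$ (both nonnegative by \eqref{cond1}) together with $c=\tfrac12(r+\lambda)\Delta t\ge0$, so that $-\Re z_1=a+c$ and $-\Re z_2=b+c$, while the bound on $|w_0|$ reads $|w_0|\le 2c$.

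The two sign conditions in \eqref{cond2} are then immediate. Indeed $\Re z_1=-(a+c)\le -c=-\tfrac12(r+\lambda)\Delta t$, and since $|w_0|\le(r+\lambda)\Delta t$ this is at most $-\tfrac12|w_0|$; the same argument applies verbatim to $\Re z_2$. This already disposes of everything demanded by \eqref{cond2} except the norm inequality $|z_0|+|w_0|\le 2\sqrt{\Re z_1\,\Re z_2}$.

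For this remaining inequality I would start from the hypothesis $|z_0|\le 2\sqrt{ab}$ of \eqref{cond1} and aim to absorb the extra term $|w_0|$ into the enlarged radicand $\Re z_1\,\Re z_2=(a+c)(b+c)$. The crux is the elementary estimate $\sqrt{(a+c)(b+c)}\ge\sqrt{ab}+c$, which after squaring is equivalent to $c(a+b)\ge 2c\sqrt{ab}$, that is, to the arithmetic--geometric-mean inequality $a+b\ge 2\sqrt{ab}$; this holds for all $a,b,c\ge0$. Multiplying by $2$ and combining with $|z_0|\le 2\sqrt{ab}$ and $|w_0|\le 2c$ then yields $|z_0|+|w_0|\le 2\sqrt{ab}+2c\le 2\sqrt{(a+c)(b+c)}=2\sqrt{\Re z_1\,\Re z_2}$, which is exactly the first requirement of \eqref{cond2}. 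The only substantive step is this one-line AM--GM argument, so I expect no real obstacle; the one point that must be got right is that the shift $c$ enters \emph{both} factors $\Re z_1$ and $\Re z_2$, which is precisely what makes the gain in the square root large enough to swallow $|w_0|$.
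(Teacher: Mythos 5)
Your proof is correct and is essentially the same as the paper's: the paper likewise bounds $|w_0|\le \lambda\Delta t\le (r+\lambda)\Delta t$ and absorbs the shift $\tfrac12(r+\lambda)\Delta t$ appearing in both $\Re z_1$ and $\Re z_2$ into the enlarged radicand via the AM--GM inequality $2\sqrt{\Re \tz_1\Re \tz_2}\le -(\Re \tz_1+\Re \tz_2)$. The only difference is presentational, in that the paper squares the whole inequality $(|z_0|+|w_0|)^2\le 4\Re z_1\Re z_2$ and expands, whereas you isolate the equivalent intermediate estimate $\sqrt{(a+c)(b+c)}\ge \sqrt{ab}+c$.
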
 

\begin{proof}
Assume \eqref{cond1}. 
Set $x= \lambda \Delta t$ and $y = (r+\lambda) \Delta t$, then $0\le  x\le y$ and
\begin{equation*}
w_0 = x \nu,~~z_1 = \tz_1-\tfrac{1}{2}y,~~z_2 = \tz_2-\tfrac{1}{2}y.
\end{equation*}
It is obvious that $\Re z_1\le -\tfrac{1}{2}|w_0|$ and $\Re z_2\le -\tfrac{1}{2}|w_0|$.
Next,
\[
(|z_0|+|w_0|)^2
\le (|z_0|+x)^2 \le (|z_0|+y)^2 \le 4\Re \tz_1\Re \tz_2 + 4y\sqrt{\Re \tz_1\Re \tz_2} +y^2.
\]
Using that
\[
2\sqrt{\Re \tz_1\Re \tz_2} \le -(\Re \tz_1 + \Re \tz_2)
\]
there follows
\[
(|z_0|+|w_0|)^2
\le 4\Re \tz_1\Re \tz_2 - 2y (\Re \tz_1 + \Re \tz_2) + y^2 = 4\Re z_1\Re z_2
\]
and thus \eqref{cond2} is fulfilled.
\end{proof}

Write $\tz_0 = z_0 + w_0$.
For the adaptation \eqref{MCS1} it clearly holds that 
\begin{equation*}
R(w_0,z_0,z_1,z_2) = R(0,\tz_0,z_1,z_2).
\end{equation*}
In the literature on the stability analysis of the MCS scheme pertinent to PDEs with mixed derivative 
terms, a variety of favourable results has been derived on the stability bound 
$|R(0,\tz_0,z_1,z_2)|\le 1$ under the condition 
\begin{equation}\label{cond3}
|\tz_0|\le 2\sqrt{\Re z_1\Re z_2}
~~{\rm and}~~
\Re z_1\le 0,~\Re z_2\le 0.
\end{equation}
Obviously, \eqref{cond3} is weaker than \eqref{cond2}.
Hence, by virtue of Lemma \ref{lemma1}, one directly arrives at positive stability results for 
the adaptation \eqref{MCS1} pertinent to two-dimensional PIDEs with mixed derivative term.
The following theorem gives two main results, which are relevant to diffusion-dominated and
convection-dominated problems, respectively. 
\begin{theorem}\label{theorem1}
For the adaptation \eqref{MCS1} there holds:
\begin{itemize}
\item[(a)] If $\theta\ge \frac{1}{3}$, then $|R(w_0,z_0,z_1,z_2)|\le 1$ whenever $w_0, z_0, z_1, z_2 \in \R$
satisfy \eqref{cond2}.
\item[(b)] If $\frac{1}{2}\le \theta \le 1$, then $|R(w_0,z_0,z_1,z_2)|\le 1$ whenever $w_0, z_0, z_1, z_2 \in \C$
satisfy \eqref{cond2}.
\end{itemize}
\end{theorem}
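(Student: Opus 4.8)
The plan is to reduce the stability bound for the PIDE amplification factor $R$ entirely to the already-developed stability theory of the MCS scheme for PDEs carrying a mixed-derivative term. The cornerstone is the identity $R(w_0,z_0,z_1,z_2)=R(0,\tz_0,z_1,z_2)$ with $\tz_0=z_0+w_0$, which I would confirm by direct substitution: both $z=w_0+z_0+z_1+z_2$ and $p=(1-\theta z_1)(1-\theta z_2)$ are invariant under the replacement $(w_0,z_0)\mapsto(0,\tz_0)$, and the coefficient $w_0+z_0$ equals $\tz_0$, so every term of $R$ is left unchanged. This exhibits $R$ as the amplification factor of the \emph{ordinary} MCS scheme in which $\tz_0$ plays the role of the mixed-derivative eigenvalue, and the contribution $w_0$ of the integral term has been absorbed into it.

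The second step is to verify that \eqref{cond2} implies the standard PDE condition \eqref{cond3} with this $\tz_0$. By the triangle inequality, $|\tz_0|=|z_0+w_0|\le|z_0|+|w_0|\le 2\sqrt{\Re z_1\,\Re z_2}$, which is the first part of \eqref{cond3}; and the bounds $\Re z_j\le-\tfrac12|w_0|\le 0$ for $j=1,2$ supply the sign conditions (and ensure $\Re z_1\,\Re z_2\ge 0$, so the square roots in both conditions are well defined). This argument is valid for complex as well as real $w_0,z_0$, so every tuple satisfying \eqref{cond2} satisfies \eqref{cond3} in either setting.

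With these two reductions in place, the theorem follows by invoking the established stability bounds for the MCS scheme on two-dimensional convection-diffusion equations with a mixed derivative term. For part (a) I would call upon the known bound $|R(0,\tz_0,z_1,z_2)|\le 1$ for \emph{real} arguments satisfying \eqref{cond3} whenever $\theta\ge\tfrac13$, and for part (b) the corresponding bound for \emph{complex} arguments satisfying \eqref{cond3} whenever $\tfrac12\le\theta\le 1$; see, e.g., \cite{intHout09b,intHout11}. Composing these with the identity of the first step and the implication $\eqref{cond2}\Longrightarrow\eqref{cond3}$ of the second step yields $|R(w_0,z_0,z_1,z_2)|\le 1$ on the region \eqref{cond2} in both the real and the complex case, as claimed.

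I expect that the genuine analytic difficulty resides wholly in those cited PDE bounds and not in the reduction. Proving $|R(0,\tz_0,z_1,z_2)|\le 1$ under \eqref{cond3} from scratch amounts to showing that the numerator of $|R|^2-1$, a polynomial in the three (complex) variables, is nonpositive on the region \eqref{cond3}; the customary route is to note that the extremal case lies on the boundary $|\tz_0|=2\sqrt{\Re z_1\,\Re z_2}$, parametrize $\tz_0$ on that circle, and reduce to a family of real polynomial inequalities whose signs are governed precisely by the ranges $\theta\ge\tfrac13$ and $\tfrac12\le\theta\le 1$. Since these inequalities are exactly what the ADI stability literature supplies, the role of the present theorem is the clean transfer of those results from the PDE to the PIDE setting through the substitution $\tz_0=z_0+w_0$.
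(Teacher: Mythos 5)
Your proposal is correct and follows essentially the same route as the paper: the identity $R(w_0,z_0,z_1,z_2)=R(0,\tz_0,z_1,z_2)$ with $\tz_0=z_0+w_0$, the observation that \eqref{cond2} implies \eqref{cond3} (the paper phrases this as ``\eqref{cond3} is weaker than \eqref{cond2}''), and the appeal to the known MCS stability bounds, namely \cite[Thm.~2.5]{intHout09b} for part (a) and \cite[Thm.~2.7]{intHout11} for part (b). Your spelled-out verification of the identity and of the implication via the triangle inequality matches exactly what the paper leaves as ``clearly'' and ``obviously.''
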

\noindent
The above theorem guarantees unconditional contractivity (that is, without any restriction on 
the spatial mesh width or temporal step size) for all jump intensities $\lambda$.
It covers the most common values $\theta$ for the MCS scheme employed in the literature, 
namely $\theta= \frac{1}{3}$ and $\theta= \frac{1}{2}$.
Parts (a) and (b) of Theorem \ref{theorem1} stem from \cite[Thm.~2.5]{intHout09b} and 
\cite[Thm.~2.7]{intHout11}, respectively.
For various extensions and refinements of these stability results, we refer to 
\cite{intHout11,intHout13,intHout09b,Mishra16}.

The interesting question arises whether similar positive results as Theorem \ref{theorem1} 
are valid for the adaptations \eqref{MCS2} and \eqref{MCS3}.
The first result is negative:
\begin{theorem}\label{theorem2}
For the adaptation \eqref{MCS2} there holds:
\begin{itemize}
\item[(a)] If $|S(w_0,z_0,z_1,z_2)|\le 1$ whenever $w_0, z_0, z_1, z_2 \in \R$
satisfy \eqref{cond2} and $w_0\ge 0$, then $\theta\ge \frac{1}{2}\sqrt{2}$.
\item[(b)] There exists no $\theta$ such that $|S(w_0,z_0,z_1,z_2)|\le 1$ 
for all $w_0, z_0, z_1, z_2 \in \R$ satisfying \eqref{cond2}.
\end{itemize}
\end{theorem}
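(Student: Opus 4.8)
The plan is to prove both parts by exhibiting explicit one-parameter destabilizing families lying in the region \eqref{cond2} and computing the limit of $S$ along them. The mechanism I would exploit is the amplification factor $1+\tfrac12 w_0$ appearing in \eqref{S}, which is absent in \eqref{R}: when $|w_0|$ is made large (which through \eqref{cond2} forces $z_1,z_2$ correspondingly negative), this factor grows linearly in the scaling parameter, while the leading term $z/p$ decays only like $1/|z_j|$, so that their product tends to a nonzero finite constant, whereas the $p^{-2}$ terms vanish. Choosing the proportions correctly then drives the limit of $S$ outside $[-1,1]$.

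For part (a) I would take $z_1=z_2=-a$, $z_0=-a$, $w_0=a$ for a real parameter $a>0$. This lies in \eqref{cond2} with $w_0\ge0$: indeed $|z_0|+|w_0|=2a=2\sqrt{\Re z_1\,\Re z_2}$ and $\Re z_1=\Re z_2=-a\le-\tfrac12|w_0|$. Here $z=-2a$ and $p=(1+\theta a)^2$, so after substituting into \eqref{S} and collecting the two $p^{-2}$ terms into the single coefficient $3-4\theta$, a short computation gives $\lim_{a\to\infty}S=1-1/\theta^2$. Hence if $\theta<\tfrac12\sqrt2$ this limit is strictly below $-1$, so $S<-1$ for all sufficiently large $a$, contradicting $|S|\le1$. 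The contrapositive is exactly statement (a), and the threshold $\tfrac12\sqrt2$ emerges as the value at which $1-1/\theta^2=-1$.

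For part (b) I would drop the sign restriction and use $z_1=z_2=-a$, $z_0=0$, $w_0=-2a$ with $a>0$, which again lies in \eqref{cond2}, now with equality $\Re z_j=-\tfrac12|w_0|$. In this case $z=-4a$, the amplification factor is $1-a$, and the same type of computation yields $\lim_{a\to\infty}S=1+4/\theta^2$. Since this exceeds $1$ for every $\theta>0$, one has $S>1$ for all large $a$ regardless of $\theta$, which establishes (b). The routine part is the limit evaluation; the genuine difficulty, and the step I expect to demand the most thought, is locating the right families, that is, realizing that all four arguments should be scaled linearly in a single parameter $a\to\infty$ and pinning down the exact proportions that simultaneously respect \eqref{cond2} and make the surviving product $(1+\tfrac12 w_0)\cdot(z/p)$ cross the stability boundary, with the choice $(w_0,z_0)=(a,-a)$ producing precisely the critical value $\theta=\tfrac12\sqrt2$ in part (a).
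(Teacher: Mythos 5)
Your proposal is correct and takes essentially the same approach as the paper: both proofs exhibit explicit families satisfying \eqref{cond2} along which $S$ tends to $1-1/\theta^2$ in part (a) (forcing $\theta\ge\tfrac{1}{2}\sqrt{2}$) and to a value exceeding $1$ in part (b) (ruling out every $\theta$), exploiting the factor $1+\tfrac{1}{2}w_0$. The only difference is cosmetic: the paper uses a two-parameter family ($z_0=0$, $z_1=-\tfrac{1}{2}x-\xi$, $z_2=-\tfrac{1}{2}x$, $w_0=\pm x$) with iterated limits $\xi\to\infty$ then $x\to\infty$, yielding $1+1/\theta^2$ in part (b), whereas your single-parameter diagonal scaling yields $1+4/\theta^2$ there --- either value suffices for the conclusion.
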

\begin{proof}
(a) Let $w_0=x \ge 0$ and consider $z_0=0$, $z_1=-\frac{1}{2}x-\xi$, $z_2=-\frac{1}{2}x$ with $\xi\ge 0$.
Then \eqref{cond2} is fulfilled and
\begin{equation*}
S(w_0,z_0,z_1,z_2) = 1 - \frac{( 1+\tfrac{1}{2}x) \xi}{(1+\tfrac{\theta}{2}x+\theta\xi)(1+\tfrac{\theta}{2}x)}
\left[1-\frac{(\tfrac{1}{2}-\theta)(x+\xi)}{(1+\tfrac{\theta}{2}x+\theta\xi)(1+\tfrac{\theta}{2}x)}\right].
\end{equation*}
Letting successively $\xi\rightarrow \infty$ and $x\rightarrow \infty$, the right-hand side tends to 
the value $1-1/\theta^2$. 
The modulus of this must be bounded from above by $1$, which yields $\theta\ge \frac{1}{2}\sqrt{2}$.

(b) Taking $w_0=-x\le 0$, an analogous derivation as in (a) leads to the requirement that $1+1/\theta^2$ 
must be bounded from above by $1$, which obviously does not hold.
\end{proof}
\noindent
Clearly, in Theorem \ref{theorem2} for the adaptation \eqref{MCS2} the popular values 
$\theta= \frac{1}{3}$ and $\theta= \frac{1}{2}$ are excluded.
We remark that the condition $w_0\in\R$, $w_0\ge 0$ is relevant to the case where the
eigenvalues $\nu$ of $\J$ are real and lie in the interval $[0,1]$.
This is often seen to be numerically fulfilled in our applications, 
compare Section~\ref{ExampleSec}.

For the adaptation \eqref{MCS3}, stability is determined by power-boundedness of the
companion matrix
\begin{equation*}
C(w_0,z_0,z_1,z_2)
= 
\begin{pmatrix}
\, T_1(w_0,z_0,z_1,z_2) & T_0(w_0,z_0,z_1,z_2)\, \\
1 & 0
\end{pmatrix}.
\end{equation*}
This is established by studying the roots of the characteristic polynomial
\begin{equation*}
P(\zeta;w_0,z_0,z_1,z_2) = \zeta^2 - T_1(w_0,z_0,z_1,z_2)\zeta - T_0(w_0,z_0,z_1,z_2).
\end{equation*}
For any given point $(w_0,z_0,z_1,z_2)$ the recurrence relation \eqref{rT} is stable if and only if the 
root condition holds: both roots $\zeta$ of $P$ have a modulus of at most one, and those with modulus 
equal to one are simple.
If $T_0=T_0(w_0,z_0,z_1,z_2)$ and $T_1=T_1(w_0,z_0,z_1,z_2)$ are real, then by the Schur criterion
the root condition is satisfied if and only if 
\begin{equation}\label{Schur}
|T_0| \le 1 ~~{\rm and}~~ T_0 + |T_1| \le 1
\end{equation}
and there is no multiple root with modulus equal to one.
\begin{theorem}\label{theorem3}
For the adaptation \eqref{MCS3} there holds:
\begin{itemize}
\item[(a)] If $\theta\ge \frac{1}{3}$, then \eqref{rT} is stable whenever $w_0, z_0, z_1, z_2 \in \R$
satisfy \eqref{cond2}  and $w_0\ge 0$.
\item[(b)] If \eqref{rT} is stable whenever $w_0, z_0, z_1, z_2 \in \R$ satisfy \eqref{cond2}, then
$\theta \ge (9+\sqrt{33})/16$.
\end{itemize}
\end{theorem}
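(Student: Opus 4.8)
My plan is to reduce the stability of \eqref{rT} to the Schur criterion \eqref{Schur} (supplemented by the exclusion of a double root on the unit circle), and to exploit a hidden link between the two-step amplification data $T_0,T_1$ and the one-step MCS factor. Throughout, abbreviate the common bracket by $q=\tfrac1p+\theta\tfrac{z_0}{p^2}+(\tfrac12-\theta)\tfrac{z_0+z_1+z_2}{p^2}$, so that \eqref{T1}, \eqref{T2} become $T_0=-\tfrac12 w_0 q$ and $T_1=1+(z+\tfrac12 w_0)q$; consequently $T_1+T_0=1+zq$ and $T_1-T_0=1+(z+w_0)q$. The key identity is that $q$ is precisely the bracket in \eqref{R} when the jump eigenvalue is switched off: comparison with \eqref{R} gives $R(0,z_0,z_1,z_2)=1+(z_0+z_1+z_2)\,q=1+(z-w_0)q$. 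Writing $\varrho:=R(0,z_0,z_1,z_2)$ and $W:=w_0 q$, one obtains the clean decomposition $T_0=-\tfrac12 W$, $T_1+T_0=\varrho+W$, $T_1-T_0=\varrho+2W$, which turns the three Schur inequalities into statements about $\varrho$ and $W$ alone.

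For part (a) I would first record the sign information implied by \eqref{cond2} with real arguments and $w_0\ge0$: since $z_1,z_2\le0$ we have $p\ge1$, and combining $|z_0|+w_0\le 2\sqrt{z_1z_2}$ with $z_1+z_2\le-2\sqrt{z_1z_2}$ gives $z_0+z_1+z_2\le-w_0$, hence $z\le0$ and $z-w_0\le0$. Because \eqref{cond2} implies \eqref{cond3} for the argument $z_0$, Theorem \ref{theorem1}(a) yields $|\varrho|\le1$. The heart of the matter is to show $q\ge0$ as soon as $\theta\ge\tfrac13$. After clearing the factor $p^2>0$ this reduces to $\Phi:=p+\tfrac12 z_0+(\tfrac12-\theta)(z_1+z_2)\ge0$; substituting $p=1-\theta(z_1+z_2)+\theta^2z_1z_2$ and minimising over the admissible region (using $|z_0|\le2\sqrt{z_1z_2}$ and $|z_1+z_2|\ge2\sqrt{z_1z_2}$) produces a minimum value $1-(1-2\theta)^2/\theta^2$, which is nonnegative exactly for $\theta\ge\tfrac13$. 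Granting $q\ge0$, the three Schur conditions then follow immediately: $T_0+T_1=1+zq\le1$ since $zq\le0$; $T_0-T_1=-(\varrho+2W)\le1$ since $\varrho\ge-1$ and $W\ge0$; and $|T_0|=\tfrac12 W\le1$ because $W=w_0 q\le(w_0-z)q=1-\varrho\le2$, using $z\le0$ and $\varrho\ge-1$.

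For part (b) the obstruction must come from $w_0<0$. My plan is to test the boundary family $w_0=-x$, $z_0=0$, $z_1=z_2=-a$ with $x=2a>0$, for which \eqref{cond2} holds with equality. Here $q$ depends only on $a$, and setting $\alpha=\theta a$ one computes $T_1-T_0=1-2(x+a)q=1-2h(\alpha)$ with $h(\alpha)=\dfrac{3\alpha(\alpha^2+(4-1/\theta)\alpha+1)}{\theta(1+\alpha)^4}$. Stability forces $T_0+|T_1|\le1$, and since $T_1<0$ in the relevant regime this reads $T_0-T_1\le1$, i.e.\ $h(\alpha)\le1$ for all admissible $\alpha$. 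Evaluating at $\alpha=1$ gives $h(1)=(18\theta-3)/(16\theta^2)$, so $h(1)\le1$ is equivalent to $16\theta^2-18\theta+3\ge0$, that is $\theta\le(9-\sqrt{33})/16$ or $\theta\ge(9+\sqrt{33})/16$. To discard the lower branch I would take $w_0=0$ in \eqref{rT}: then $T_0=0$ and the only nontrivial root is $T_1=R(0,z_0,z_1,z_2)$, so stability for all \eqref{cond2} forces $|R(0,z_0,z_1,z_2)|\le1$ on all of \eqref{cond3}, which is known to require $\theta\ge\tfrac13$. Since $(9-\sqrt{33})/16<\tfrac13$, the lower branch is ruled out and $\theta\ge(9+\sqrt{33})/16$ remains.

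The main obstacle in part (a) is the positivity $q\ge0$: it is exactly here that the threshold $\theta=\tfrac13$ enters, and the minimisation of $\Phi$ must be carried out carefully over the two-parameter region cut out by \eqref{cond2}; once positivity is in hand everything collapses through the decomposition $T_0=-\tfrac12W$, $T_1\pm T_0=\varrho+W,\ \varrho+2W$ together with $|\varrho|\le1$. For part (b) the delicate points are choosing the extremal family and, above all, correctly isolating the larger root $(9+\sqrt{33})/16$: the intermediate range is excluded by the single witness $\alpha=1$, while the spurious small-$\theta$ branch is removed by the $w_0=0$ reduction to the MCS scheme. A final routine check in both parts is that no double root of modulus one occurs; for the real quadratic $\zeta^2-T_1\zeta-T_0$ this can only happen at $(T_1,T_0)=(2,-1)$, i.e.\ $\varrho=-1$ and $W=2$, a degenerate configuration attained at most in a limit and handled separately.
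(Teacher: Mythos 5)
Your route is essentially the paper's: both rest on the identity $\varrho := R(0,z_0,z_1,z_2)=1+(z_0+z_1+z_2)q$, i.e.\ $q=(\varrho-1)/(z_0+z_1+z_2)$, both reduce stability of \eqref{rT} to the Schur inequalities \eqref{Schur} plus exclusion of a unimodular double root, and in part (b) you use the identical witness (your $\alpha=1$ is exactly the paper's $w_0=-2/\theta$, $z_0=0$, $z_1=z_2=-1/\theta$), arriving at the same quadratic $16\theta^2-18\theta+3\ge 0$. Two comparative remarks. First, your ``heart of the matter'' minimisation of $\Phi$ is superfluous: once $|\varrho|\le 1$ is quoted from Theorem \ref{theorem1}(a) and one notes (as you do) that $w_0>0$ forces $z_0+z_1+z_2\le -w_0<0$, the sign $q\ge 0$ falls out of the identity immediately; this is precisely how the paper argues, so the two-parameter optimisation buys you nothing (though your computation of it is correct). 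Second, in part (b) your explicit elimination of the spurious branch $\theta\le(9-\sqrt{33})/16$ via the $w_0=0$ reduction addresses a point the paper waves through with ``readily seen''; the necessity of $\theta\ge\frac13$ for $|R(0,z_0,z_1,z_2)|\le 1$ under \eqref{cond3} that you invoke is true and standard (it can be checked with the boundary family $z_0=-2a$, $z_1=z_2=-a$), but you should cite or prove it rather than merely call it ``known''.

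The genuine gap is the double-root exclusion in part (a). Stability of \eqref{rT} is the root condition, and the inequalities \eqref{Schur} alone do not give power-boundedness when a double root lies on the unit circle, so this case must actually be ruled out. You correctly reduce it to $(T_1,T_0)=(2,-1)$, i.e.\ $\varrho=-1$ and $W=2$ where $W=w_0q$ (the alternative $(T_1,T_0)=(-2,-1)$ forcing $\varrho=-5$), but you then only assert the configuration is ``degenerate, attained at most in a limit and handled separately''. That is not an argument, and the description is inaccurate: $w_0q=2$ together with $(z_0+z_1+z_2)q=-2$ forces $z=0$ exactly, no limit involved; under \eqref{cond2} the equality $z=0$ in turn forces $z_0=c\ge 0$, $z_1=z_2=-a$, $w_0=2a-c$, and on this slice
\begin{equation*}
1-\varrho \;=\; \frac{(2a-c)\bigl(1+(4\theta-1)a+\theta^{2}a^{2}+\tfrac12 c\bigr)}{(1+\theta a)^{4}},
\end{equation*}
which for $\theta\ge\frac13$ is decreasing in $c$ and at $c=0$ is strictly less than $2$, because $(1+\theta a)^{4}-a\bigl(1+(4\theta-1)a+\theta^{2}a^{2}\bigr)$ has only positive coefficients; hence $\varrho=-1$ is infeasible outright, not a limit point. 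The paper performs exactly this step, deriving $z=0$ and $R_0=-1$ and noting the contradiction with \eqref{R}. Without some such argument, your part (a) establishes only \eqref{Schur}, not stability.
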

\begin{proof}
(a) Write $R_0 = R(0,z_0,z_1,z_2)$.
Condition \eqref{cond2} implies
\begin{equation*}\label{cond4}
|z_0|\le 2\sqrt{z_1 z_2}
~~{\rm and}~~
z_1\le 0,~ z_2\le 0
\end{equation*}
and by \cite[Thm.~2.5]{intHout09b} it holds that $-1\le R_0 \le 1$.

If $w_0=0$, then $T_0=0$ and $T_1=R_0$ and the root condition is clearly fulfilled.
Assume next  $w_0 > 0$.
Using \eqref{cond2} yields
\begin{equation*}
z = w_0 + z_0 + z_1 + z_2  \le 2\sqrt{z_1 z_2} + z_1 + z_2 = - \left( \sqrt{-z_1} - \sqrt{-z_2} \right)^2 \le 0
\end{equation*}
and thus
\begin{equation*}
0<w_0 \le - (z_0 + z_1 + z_2) = |z_0 + z_1 + z_2|.
\end{equation*}
We have
\begin{equation}\label{Ts}
T_0 = - \frac{\tfrac{1}{2}w_0}{z_0 + z_1 + z_2} (R_0-1)
~~{\rm and}~~ T_1 = 1 + \frac{z+\tfrac{1}{2}w_0}{z_0 + z_1 + z_2} (R_0-1).
\end{equation}
Hence, 
\begin{equation*}
|T_0| \le \tfrac{1}{2} \left| R_0 - 1 \right| \le 1,
\end{equation*}
which proves the first condition in \eqref{Schur}.
Next,
\begin{equation*}
T_0 + T_1 = 1 + \frac{z}{z_0 + z_1 + z_2} (R_0-1) \le 1,
\end{equation*}
since $z_0 + z_1 + z_2<0$, $z\le 0$ and $R_0\le 1$.
Subsequently,
\begin{equation*}
T_0 - T_1 = -1 - \frac{z+w_0}{z_0 + z_1 + z_2} (R_0-1) = -R_0 - \frac{2w_0}{z_0 + z_1 + z_2} (R_0-1)\le 1,
\end{equation*}
since $w_0>0$, $z_0 + z_1 + z_2<0$ and $-1\le R_0 \le 1$.
This proves the second condition in \eqref{Schur}.

Finally, suppose $\zeta_0$ is a double root of $P$ with modulus one.
Since the coefficients of $P$ are real, it must hold that $\zeta_0 \in \{-1,1\}$
and consequently $T_0=-1$ and $T_1 \in \{-2,2\}$.
Using the expressions \eqref{Ts}, the case $(T_0,T_1)=(-1,-2)$ leads to
$R_0=-5$, which is a contradiction.
Next, the case $(T_0,T_1)=(-1,2)$ implies $z=0$ and $R_0=-1$, which is 
also a contradiction, cf. \eqref{R}.
Thus there is no multiple root with modulus one. 

(b) Let $w_0 = -2/\theta$, $z_0=0$, $z_1=z_2=-1/\theta$.
Then \eqref{cond2} is fulfilled and the necessary condition
\begin{equation*}
T_0 - T_1 = -R_0 - \frac{2w_0}{z_0 + z_1 + z_2} (R_0-1) = -R_0 - 2(R_0-1) \le 1
\end{equation*}
becomes $R_0 \ge \tfrac{1}{3}$.
There holds
\begin{equation*}
R_0 = 1-\frac{1}{2\theta} + \left(\frac{1}{2}-\theta\right) \frac{1}{4\theta^2}\,.
\end{equation*}
A simple calculation leads to the requirement that $8\theta^2-9\theta+\tfrac{3}{2} \ge 0$,
which is readily seen to imply $\theta \ge (9+\sqrt{33})/16$.
\end{proof}
\noindent
Part (a) of Theorem \ref{theorem3} for the adaptation \eqref{MCS3} is a positive result 
and includes the common values $\theta= \frac{1}{3}\,,\, \frac{1}{2}$.
Part (b) of the theorem is not favourable, on the other hand, given the 
(large) lower bound $(9+\sqrt{33})/16 \approx 0.92$.\\

In the following we derive positive conclusions on the stability of both adaptations 
\eqref{MCS2}, \eqref{MCS3} under the {\it assumption that $\lambda T$ is of moderate size}.
These are obtained by a perturbation-type argument with respect to $w_0$, as employed 
in \cite{Kaushansky17}.
Define
\begin{equation}
Q(z_0,z_1,z_2) = \frac{1}{p}+\theta\frac{z_0}{p^2}+(\tfrac{1}{2}-\theta)\frac{z_0+z_1+z_2}{p^2}\,,
\end{equation}
and consider the condition
\begin{equation}\label{cond5}
|z_0|\le 2\sqrt{\Re z_1\Re z_2}
~~{\rm and}~~
\Re z_1\le 0,~\Re z_2\le 0,
\end{equation}
which is clearly valid under (\ref{cond1}).
\begin{lemma}\label{lemma2}
Let $L = \max\{1/\theta\,,\,2\}$. Then $|Q(z_0,z_1,z_2)| \le L$ whenever \eqref{cond5} holds.
\end{lemma}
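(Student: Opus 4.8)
The plan is to reduce everything to two elementary pointwise estimates after a convenient regrouping. Write $a=-\Re z_1\ge 0$ and $b=-\Re z_2\ge 0$, so that \eqref{cond5} becomes $|z_0|\le 2\sqrt{ab}$. Since $\theta>0$ and $\Re z_j\le 0$, the denominator is easy to control: $|1-\theta z_1|\ge\Re(1-\theta z_1)=1+\theta a\ge 1$ and likewise for $z_2$, whence $|p|\ge(1+\theta a)(1+\theta b)\ge 1$ and in particular $1/|p|\le 1$. The first real idea is to merge the two summands of $Q$ carrying $z_0$: their coefficients $\theta$ and $\tfrac12-\theta$ add to $\tfrac12$, so that
\[
Q=\frac{1}{p}+\frac{\tfrac12 z_0+(\tfrac12-\theta)(z_1+z_2)}{p^2},
\]
and the triangle inequality gives $|Q|\le 1+\tfrac12\frac{|z_0|}{|p|^2}+|\tfrac12-\theta|\frac{|z_1+z_2|}{|p|^2}$.

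Next I would establish the two bounds (A) $\frac{|z_0|}{|p|^2}\le\frac{1}{2\theta}$ and (B) $\frac{|z_1+z_2|}{|p|^2}\le\frac{1}{\theta}$. For (A) I use $|z_0|\le 2\sqrt{ab}$ together with $|p|^2\ge(1+\theta a)^2(1+\theta b)^2$ and the AM--GM estimate $\sqrt{\theta a}/(1+\theta a)\le\tfrac12$, which yields $\frac{2\sqrt{ab}}{(1+\theta a)(1+\theta b)}\le\frac{1}{2\theta}$; dropping the surplus factor $(1+\theta a)(1+\theta b)\ge 1$ in the denominator then gives (A). For (B) I bound $|z_1+z_2|\le|z_1|+|z_2|$, discard one of the two factors of $|p|^2$ (each is $\ge1$), and invoke the key inequality $\frac{|z_j|}{|1-\theta z_j|^2}\le\frac{1}{2\theta}$. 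This last inequality follows from $|1-\theta z_j|^2=1-2\theta\Re z_j+\theta^2|z_j|^2\ge 1+\theta^2|z_j|^2\ge 2\theta|z_j|$, where $\Re z_j\le 0$ is exactly what makes the cross term harmless.

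Combining (A) and (B) produces the single estimate $|Q|\le 1+\frac{1}{4\theta}+\frac{|\tfrac12-\theta|}{\theta}$, after which a case distinction on the sign of $\tfrac12-\theta$ finishes the proof and, tellingly, reproduces the two regimes in $L=\max\{1/\theta,2\}$. For $\theta\ge\tfrac12$ the right-hand side equals $2-\frac{1}{4\theta}<2$, while for $0<\theta\le\tfrac12$ it equals $\frac{3}{4\theta}\le\frac{1}{\theta}$; in both cases the bound is at most $L$.

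I expect the only genuine obstacle to be getting the constants sharp enough. A naive triangle-inequality split of the three original summands of $Q$ is too wasteful and overshoots the target value $2$ near $\theta=1$, so the two structural moves above are essential: merging the $z_0$-terms so that $z_0$ enters with the single coefficient $\tfrac12$ (exploiting $\theta+(\tfrac12-\theta)=\tfrac12$), and treating $z_1+z_2$ through the sharp ratio bound $\frac{|z_j|}{|1-\theta z_j|^2}\le\frac{1}{2\theta}$ instead of estimating $|z_1+z_2|$ and $|p|$ separately. Once these are in place, the remaining steps are routine applications of AM--GM to $1+\theta^2|z_j|^2$ and to $1+\theta a$.
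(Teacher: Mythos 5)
Your proof is correct --- every step checks out, including the two estimates (A) and (B) and the final case distinction. Your argument shares the paper's crucial first move: regrouping the two $z_0$-terms so that $z_0$ enters with the single coefficient $\tfrac12$ (the paper writes $|Q| \le |1+\tfrac{z_0}{2p}+(\tfrac12-\theta)\tfrac{z_1+z_2}{p}|$ after pulling out $1/p$). After that, the two arguments organize the elementary estimates differently. The paper first converts $|z_0|\le 2\sqrt{\Re z_1\Re z_2}\le |z_1+z_2|$ (AM--GM on the real parts), so that only the single ratio $|z_1+z_2|/|p|$ remains, and then invokes one key inequality, $|p|\ge \theta\,|z_1+z_2|$, to conclude $|Q|\le 1+\bigl(\tfrac12+|\tfrac12-\theta|\bigr)/\theta=\max\{1/\theta,2\}$ exactly. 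You instead keep the $z_0$-ratio and the $(z_1+z_2)$-ratio separate and measure each against $|p|^2$: your (A) uses only the real parts of the denominator factors, $|1-\theta z_j|\ge 1+\theta a$, plus AM--GM, and your (B) is the per-factor inequality $|z_j|/|1-\theta z_j|^2\le 1/(2\theta)$. The underlying mechanism is the same --- indeed the paper's inequality $|p|\ge\theta|z_1+z_2|$ is proved by precisely the kind of AM--GM step you use in (B), namely $1+\theta^2|z_j|^2\ge 2\theta|z_j|$. What your version buys is a marginally sharper bound, $\max\bigl\{\tfrac{3}{4\theta},\,2-\tfrac{1}{4\theta}\bigr\}$ in place of $\max\{1/\theta,2\}$; what the paper's version buys is brevity, since everything funnels through one ratio and the bound $L$ pops out with equality. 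Your closing observation that a naive three-term triangle-inequality split overshoots $2$ near $\theta=1$ is also accurate and explains why the regrouping is genuinely needed.
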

\begin{proof}
It is easily seen that $|p|\ge 1$ and $|z_0| \le |z_1+z_2|$. Hence,
\begin{equation*}
|Q(z_0,z_1,z_2)| \le
\left| 1+\frac{z_0}{2p}+\left(\tfrac{1}{2}-\theta\right)\frac{z_1+z_2}{p} \right| \le
1+\left( \tfrac{1}{2} + \left|\tfrac{1}{2}-\theta\right| \right) \left| \frac{z_1+z_2}{p} \right|.
\end{equation*}
Next, it can be verified that $|p|\ge \theta |z_1+z_2|$ 
and therefore
\begin{equation*}
|Q(z_0,z_1,z_2)| \le
1+\left( \tfrac{1}{2} + \left|\tfrac{1}{2}-\theta\right| \right)\! /\theta = \max\{1/\theta\,,\,2\}.
\end{equation*}
\end{proof}

\begin{theorem}\label{theorem2b}
Let $M = e^{(1+L)\lambda T}$ and $n\Delta t \le T$.
For the adaptation \eqref{MCS2} there holds:
\begin{itemize}
\item[(a)] If $\theta\ge \frac{1}{3}$, then $|S(w_0,z_0,z_1,z_2)^n|\le M$ 
whenever $|w_0| \le \lambda \Delta t$~and $z_0, z_1, z_2 \in \R$ satisfy \eqref{cond5}.
\item[(b)] If $\frac{1}{2}\le \theta \le 1$, then $|S(w_0,z_0,z_1,z_2)^n|\le M$ 
whenever $|w_0| \le\lambda \Delta t$ and $z_0, z_1, z_2 \in \C$ satisfy \eqref{cond5}.
\end{itemize}
\end{theorem}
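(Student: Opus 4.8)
The plan is to treat $S$ as a perturbation of the pure-PDE amplification factor $R_0 := R(0,z_0,z_1,z_2)$, exploiting the remark preceding the theorem that $R$, $S$, $T_1$ all coincide at $w_0=0$ (so that $R_0 = S(0,z_0,z_1,z_2)$). First I would record the algebraic identity that drives the perturbation argument. Since the definition of $Q$ depends only on $z_0,z_1,z_2$, inspection of \eqref{R} at $w_0=0$ gives $R_0-1 = (z_0+z_1+z_2)\,Q$, and therefore, with $z=w_0+z_0+z_1+z_2$, one has $z\,Q = (R_0-1)+w_0\,Q$. Substituting this into \eqref{S} and collecting terms yields
\begin{equation*}
S(w_0,z_0,z_1,z_2) = R_0 + w_0\Bigl[\bigl(1+\tfrac{1}{2}w_0\bigr)Q + \tfrac{1}{2}(R_0-1)\Bigr],
\end{equation*}
which displays $S-R_0$ as an $O(w_0)$ quantity and is the crux of the whole estimate.

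Next I would bound the two quantities $R_0$ and $Q$ on the right. Condition \eqref{cond5} is exactly condition \eqref{cond3} with $\tz_0$ replaced by $z_0$, so the established MCS stability bound $|R(0,z_0,z_1,z_2)|\le 1$ applies: for part (a), with $\theta\ge\frac{1}{3}$ and real arguments, this is \cite[Thm.~2.5]{intHout09b}; for part (b), with $\frac{1}{2}\le\theta\le 1$ and complex arguments, it is \cite[Thm.~2.7]{intHout11}. In either case $|R_0|\le 1$, hence $|R_0-1|\le 2$. The factor $Q$ is controlled by Lemma \ref{lemma2}, which gives $|Q|\le L$ under \eqref{cond5}. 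Applying the triangle inequality to the displayed identity together with $|1+\tfrac{1}{2}w_0|\le 1+\tfrac{1}{2}|w_0|$ and $|w_0|\le\lambda\Delta t$ then gives the single-step estimate
\begin{equation*}
|S(w_0,z_0,z_1,z_2)| \le 1 + (1+L)\,|w_0| + \tfrac{1}{2}L\,|w_0|^2.
\end{equation*}

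To finish I would pass from this per-step bound to the $n$-step bound. Setting $a=(1+L)|w_0|$ and using $\tfrac{1}{2}L\le\tfrac{1}{2}(1+L)^2$, the right-hand side above is at most $1+a+\tfrac{1}{2}a^2\le e^{a}$, whence $|S|\le e^{(1+L)|w_0|}\le e^{(1+L)\lambda\Delta t}$ because $|w_0|\le\lambda\Delta t$. Raising to the $n$-th power and invoking $n\Delta t\le T$ yields $|S^n|\le e^{(1+L)\lambda(n\Delta t)}\le e^{(1+L)\lambda T}=M$, uniformly over all $(w_0,z_0,z_1,z_2)$ admitted in (a), respectively (b), which is the claim.

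The step I expect to require the most care is the passage from $S-R_0=O(w_0)$ to the exponential bound with exponent \emph{exactly} $(1+L)\lambda T$. A crude triangle-inequality treatment of $|1+\tfrac{1}{2}w_0|\,|w_0|$ produces the quadratic term $\tfrac{1}{2}L|w_0|^2$, and one must check that this term does not inflate the constant in the exponent; the observation $\tfrac{1}{2}L\le\tfrac{1}{2}(1+L)^2$ lets it be absorbed into the Taylor series of $e^{(1+L)|w_0|}$ at no cost. The remaining point demanding attention is bookkeeping: matching the real/complex hypotheses and the $\theta$-range of parts (a) and (b) to the correct cited bound on $R_0$ and to the hypotheses of Lemma \ref{lemma2}, both of which need only \eqref{cond5}.
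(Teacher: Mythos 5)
Your proposal is correct and follows essentially the same route as the paper's proof: the identity $S = R_0 + \bigl[\tfrac{1}{2}(R_0-1)+Q\bigr]w_0 + \tfrac{1}{2}Qw_0^2$ (your displayed form is the same expression regrouped), the bounds $|R_0|\le 1$ from \cite[Thm.~2.5]{intHout09b} and \cite[Thm.~2.7]{intHout11} together with $|Q|\le L$ from Lemma~\ref{lemma2}, and the step $1+(1+L)|w_0|+\tfrac{1}{2}L|w_0|^2\le e^{(1+L)|w_0|}$ leading to $|S^n|\le M$. Your explicit justification of that last inequality via $\tfrac{1}{2}L\le\tfrac{1}{2}(1+L)^2$ is a detail the paper leaves implicit, but the argument is the same.
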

\begin{proof}
Write $R_0 = R(0,z_0,z_1,z_2)$ as before.
Upon inserting $z = w_0+(z_0+z_1+z_2)$ into (\ref{S}), it is readily shown that 
$S=S(w_0,z_0,z_1,z_2)$ can be expressed as
\begin{equation*}
S = R_0 + \left[ \tfrac{1}{2}(R_0-1) +Q \right]\! w_0 + \tfrac{1}{2} Q w_0^2.
\end{equation*}
Assume $|R_0|\le 1$. Then, by applying Lemma \ref{lemma2}, 
\begin{equation*}
|S| \le 1 + (1+L) |w_0| + \tfrac{1}{2} L |w_0|^2 \le e^{(1+L) |w_0|} \le e^{(1+L) |w_0|} \le e^{(1+L) \lambda \Delta t}.
\end{equation*}
Consequently, for $n\Delta t \le T$,
\begin{equation*}
|S^n| \le e^{(1+L) \lambda T} = M.
\end{equation*}
Parts (a) and (b) now follow by virtue of \cite[Thm.~2.5]{intHout09b} and 
\cite[Thm.~2.7]{intHout11}, respectively.
\end{proof}

\noindent
Denote by $\| \cdot \|$ the maximum norm for matrices.
\begin{theorem}\label{theorem3b}
Let $M = e^{2L \lambda T}$ and $n\Delta t \le T$.
For the adaptation \eqref{MCS3} there holds:
\begin{itemize}
\item[(a)] If $\theta\ge \frac{1}{3}$, then $\|C(w_0,z_0,z_1,z_2)^n\|\le M$ 
whenever $|w_0| \le \lambda \Delta t$~and $z_0, z_1, z_2 \in \R$ satisfy \eqref{cond5}.
\item[(b)] If $\frac{1}{2}\le \theta \le 1$, then $\|C(w_0,z_0,z_1,z_2)^n\|\le M$ 
whenever $|w_0| \le\lambda \Delta t$ and \mbox{$z_0, z_1, z_2\in \C$} satisfy \eqref{cond5}.
\end{itemize}
\end{theorem}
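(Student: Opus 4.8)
The plan is to mirror the scalar perturbation argument used for Theorem \ref{theorem2b}, but now at the level of the companion matrix $C$, relying on submultiplicativity of the maximum norm (the maximum absolute row sum). First I would record the algebraic decomposition that isolates the dependence on $w_0$. Writing $R_0 = R(0,z_0,z_1,z_2)$ and using $z = w_0 + (z_0+z_1+z_2)$ together with the definition of $Q$, the expressions \eqref{T1} and \eqref{T2} give $R_0 = 1 + (z_0+z_1+z_2)Q$ and hence
\begin{equation*}
T_1 = R_0 + \tfrac{3}{2}w_0\, Q, \qquad T_0 = -\tfrac{1}{2}w_0\, Q .
\end{equation*}
In particular, at $w_0=0$ one has $T_1=R_0$ and $T_0=0$, so $C$ is a perturbation of order $|w_0|$ of the (lower-triangular) companion matrix at $w_0=0$, exactly as $S$ was a perturbation of $R_0$ in Theorem \ref{theorem2b}.

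Next I would bound $\|C\|$ directly. For a companion matrix the maximum (row-sum) norm is $\|C\| = \max\{\,|T_1|+|T_0|,\,1\,\}$. Using the decomposition above and the triangle inequality,
\begin{equation*}
|T_1| + |T_0| \le |R_0| + \bigl(\tfrac{3}{2}+\tfrac{1}{2}\bigr)|w_0|\,|Q| = |R_0| + 2|w_0|\,|Q| .
\end{equation*}
Under \eqref{cond5}, Lemma \ref{lemma2} supplies $|Q|\le L$, while the cited stability results give $|R_0|\le 1$: invoking \cite[Thm.~2.5]{intHout09b} in part (a) and \cite[Thm.~2.7]{intHout11} in part (b), applied with $w_0=0$ so that \eqref{cond3} reduces precisely to \eqref{cond5}. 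Hence $|T_1|+|T_0| \le 1 + 2L|w_0|$, and since $|w_0|\le\lambda\Delta t$ we obtain $\|C\| \le 1 + 2L\lambda\Delta t$.

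Finally, submultiplicativity of the maximum norm together with $1+x\le e^x$ closes the argument: for $n\Delta t\le T$,
\begin{equation*}
\|C^n\| \le \|C\|^n \le (1 + 2L\lambda\Delta t)^n \le e^{2L\lambda\Delta t\, n} \le e^{2L\lambda T} = M .
\end{equation*}
I do not expect a genuine obstacle here: the substance is already packaged in Lemma \ref{lemma2} and in the established bound $|R_0|\le 1$, and once the clean splitting $T_1 = R_0 + \tfrac32 w_0 Q$, $T_0 = -\tfrac12 w_0 Q$ is in hand the estimate is a one-line computation. The only points requiring care are that the two parts differ solely in which cited theorem delivers $|R_0|\le1$ (real versus complex $z_1,z_2$), and that one must pass through a \emph{submultiplicative} matrix norm — which is why the two-step recurrence \eqref{rT} is handled via the companion matrix $C$ rather than via a scalar amplification factor as in \eqref{rR} and \eqref{rS}.
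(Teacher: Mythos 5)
Your proposal is correct and follows essentially the same route as the paper: the identical decomposition $T_1 = R_0 + \tfrac{3}{2}w_0 Q$, $T_0 = -\tfrac{1}{2}w_0 Q$ (the paper writes it as a splitting of the companion matrix $C$), the bound $\|C\|\le 1+2L|w_0|$ via Lemma \ref{lemma2} and $|R_0|\le 1$ from the same two cited theorems, and the conclusion by submultiplicativity with $1+x\le e^x$. Your write-up merely makes explicit the steps the paper compresses into ``the remainder of the proof is the same as that of Theorem \ref{theorem2b}.''
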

\begin{proof}
From (\ref{T1}), (\ref{T2}) it is directly clear that
\begin{equation*}
C =   
C(w_0,z_0,z_1,z_2) =
\begin{pmatrix}
R_0 & 0\, \\
1 & 0
\end{pmatrix}
+
\begin{pmatrix}
\tfrac{3}{2}Q w_0 & -\tfrac{1}{2}Q w_0 \\
0 & 0
\end{pmatrix}.
\end{equation*}
Assume $|R_0|\le 1$. Then, by Lemma \ref{lemma2}, 
\begin{equation*}
\| C \| \le 1+2L |w_0| \le e^{2L |w_0|} \le e^{ 2L \lambda \Delta t}
\end{equation*}
and the remainder of the proof is the same as that of Theorem \ref{theorem2b}.
\end{proof}

\section{Numerical examples}\label{ExampleSec}
In this section we examine by numerical experiments the stability and convergence behaviour
of the three adaptations \eqref{MCS1}, \eqref{MCS2}, \eqref{MCS3} of the MCS scheme in the 
application to the semidiscretized Bates PIDE described in Section \ref{BatesSec}.
We consider the four parameter sets for the Bates model and European put option listed in
Table~\ref{table1}.
Case~I has  been considered in e.g. \cite{Salmi14a,vonSydow15} and 
Case~II in \cite{Ballestra16,Ballestra10}.
Case~III stems from the recent monograph \cite{Itkin17}.
Case~IV is new and has been constructed as a specifically challenging example, 
where in particular $\lambda T$ is large and the Feller condition 
$2\kappa\eta>\sigma^2$ is violated.

\begin{table}
\begin{center}
\begin{tabular}{|c|r|r|r|r|r|}
\hline
& {\rm Case I} & {\rm Case II} & {\rm Case III} & {\rm Case IV}\\
\hline
  $\kappa$   &2     &2      &1.5   &2.5   \\
  $\eta$     &0.04  &0.04   &0.1   &0.05  \\
  $\sigma$   &0.25  &0.4    &0.3   &0.6   \\
  $\rho$     &-0.5  &-0.5   &-0.5  &-0.8  \\
  $r$        &0.03  &0.03   &0.05  &0.01  \\
  $\lambda$  &0.2   &5      &5     &10    \\
  $\gamma$   &-0.5  &-0.005 &0.3   &-0.05 \\
  $\delta$   &0.4   &0.1    &0.1   &0.01  \\
  $T$        &0.5   &0.5    &1     &5     \\
  $K$        &100   &100    &100   &100   \\
  \hline
\end{tabular}
\vskip0.4cm
\caption{Parameter sets for the Bates model and European put option.}
\label{table1}
\end{center}
\end{table}

For the MCS scheme the common parameter values $\theta=\frac{1}{3}$ and 
$\theta=\frac{1}{2}$ are selected.
Recall that the latter choice yields the Craig--Sneyd (CS) scheme.
In addition, we study the reduced version of each of \eqref{MCS1}, \eqref{MCS2}, 
\eqref{MCS3} that ends directly with the computation of $Y_2$ and sets $U_n=Y_2$.
These can be viewed as adaptations of the well-known Douglas (Do) scheme 
(cf.~e.g.~\cite{Hundsdorfer02,Hundsdorfer03,intHout07}) and will be 
applied with $\theta=\frac{1}{2}$.

We investigate the {\it global temporal discretization error}\, at time $t=T=N \Delta t$,
\begin{equation*}\label{t_error}
{\widehat e}\,(N; m_1, m_2) = \max \left\{\, |U_k(T)-U_{N,\,k}|:~\tfrac{1}{2} K 
< s_i < \tfrac{3}{2}K,~0 < v_j < 1 \right\},
\end{equation*}
where index $k$ is such that $U_k(T)$ and $U_{N,\,k}$ correspond to the spatial 
grid point $(s_i,v_j)$.
Clearly, the temporal error under consideration is defined via the maximum norm.
The set $\{(s,v):\,\tfrac{1}{2} K < s < \tfrac{3}{2}K,\, 0 < v < 1 \}$ forms a 
natural region of interest for practical applications. 
For the truncation of the spatial domain, $S_{\max} = 8K$ and $V_{\max}=5$ are 
taken, and the semidiscrete solution vector $U(T)$ in 
${\widehat e}$\, is approximated to high accuracy by applying a suitable time 
stepping method with a very small step size.

Figure~\ref{Eigenvalues_J} shows the eigenvalues of the matrix $\J$ in the
four cases of Table~\ref{table1} when $m_1=2m_2=200$.
They all lie in or are close to the real interval $[0,1]$ in Cases 
\mbox{I, II, III}, whereas they are dispersed in the complex unit 
disk in Case IV.

Figure~\ref{TemporalErrors1} displays for Case I (left column) and Case II (right
column) the temporal errors ${\widehat e}\,(N;200,100)$ versus $1/N$ for a range 
of values $N$ with $10\le N\le 1000$.
Figure~\ref{TemporalErrors2} shows the same for Case III (left column) and 
Case IV (right column).
The $i$-th row of each figure concerns the $i$-th adaptation ($i=1,2,3$).
The results corresponding to the MCS scheme with $\theta=\frac{1}{3}$ and 
$\theta=\frac{1}{2}$ are indicated by green and blue circles, respectively, 
and those corresponding to the Do scheme with $\theta=\frac{1}{2}$ by red 
diamonds.

\begin{figure}
\begin{center}
\includegraphics[width=0.67\textwidth]{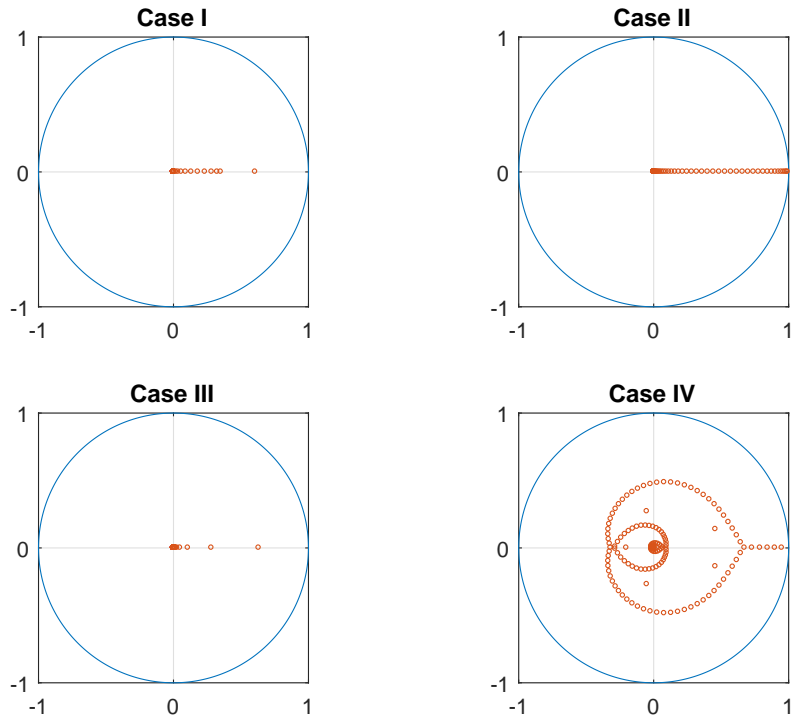}
\end{center}
\caption{Eigenvalues of the matrix $\J$ in the complex plane, indicated by red circles,
in the four cases of Table~\ref{table1}.}
\label{Eigenvalues_J} 
\end{figure}

In Figures \ref{TemporalErrors1}, \ref{TemporalErrors2} we observe that in all
but one instance the temporal errors always remain below a moderate value and 
are (essentially) monotonically decreasing as $N$ increases.
In Case IV the second adaptation yields large errors for 
each ADI scheme if $N\lesssim 50$.
We attribute this to a lack of unconditional stability, which may reveal
itself when $\lambda T$ is large.
Except for this instance, for each given adaptation, the MCS scheme with
$\theta=\frac{1}{3}$ always yields temporal errors that are smaller than, 
or approximately equal to, those for the CS and Do schemes, and it always 
shows a smooth, second-order convergence behaviour.
Additional numerical experiments on finer spatial grids, such as with
$m_1=2m_2=400$, show that the temporal errors for all adaptations of the 
MCS scheme with $\theta=\frac{1}{3}$ are visually identical, indicating
that second-order convergence holds in a favourable {\it stiff sense}.

In Cases I, II the temporal errors obtained with the CS and Do schemes are 
often relatively large for $N \lesssim 100$.
This is due to the nonsmoothness of the initial function and can be resolved 
by using backward Euler damping (Rannacher time stepping).
This is computationally expensive, however, and does not alter our 
above conclusion.

Finally, we note that Figures \ref{TemporalErrors1}, \ref{TemporalErrors2} 
suggest that the temporal error constant increases as $\lambda T$ increases. 
Additional numerical experiments, where $\lambda$ and $T$ are varied and
all else is kept fixed, confirm this.

\clearpage

\begin{figure}[h!]
\begin{center}
\begin{tabular}{c c}
         \includegraphics[width=0.5\textwidth]{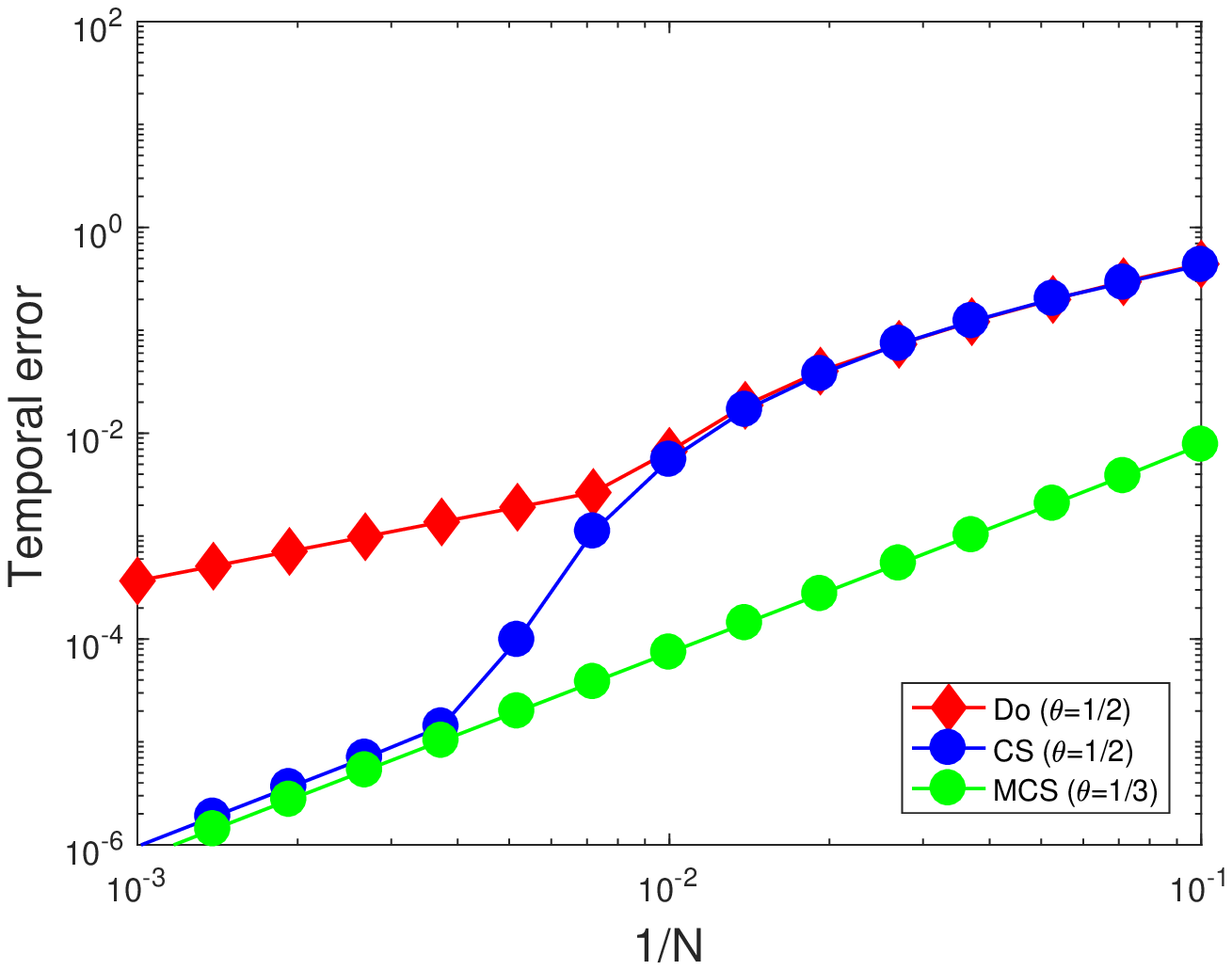}&
         \includegraphics[width=0.5\textwidth]{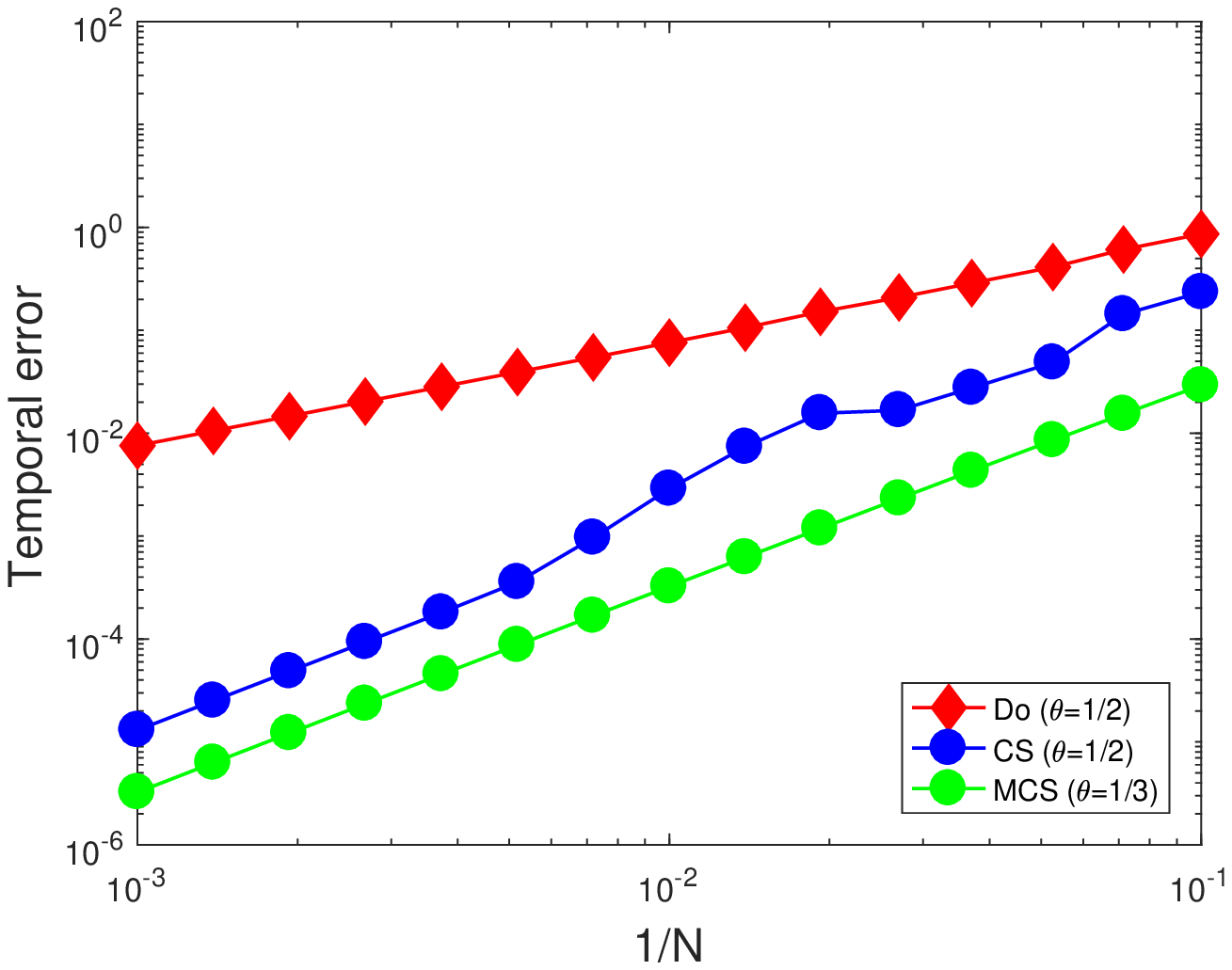}\\
         \includegraphics[width=0.5\textwidth]{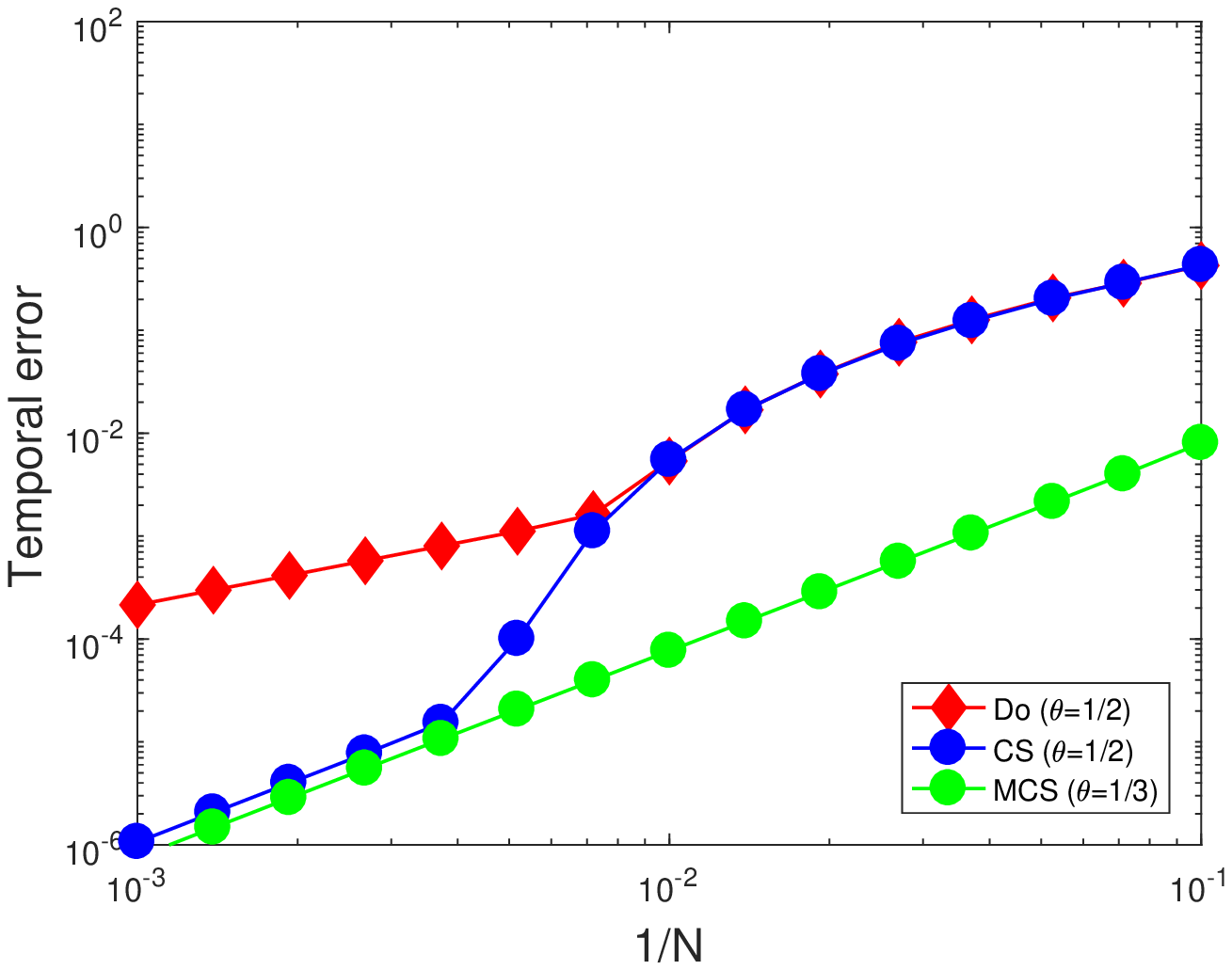}&
         \includegraphics[width=0.5\textwidth]{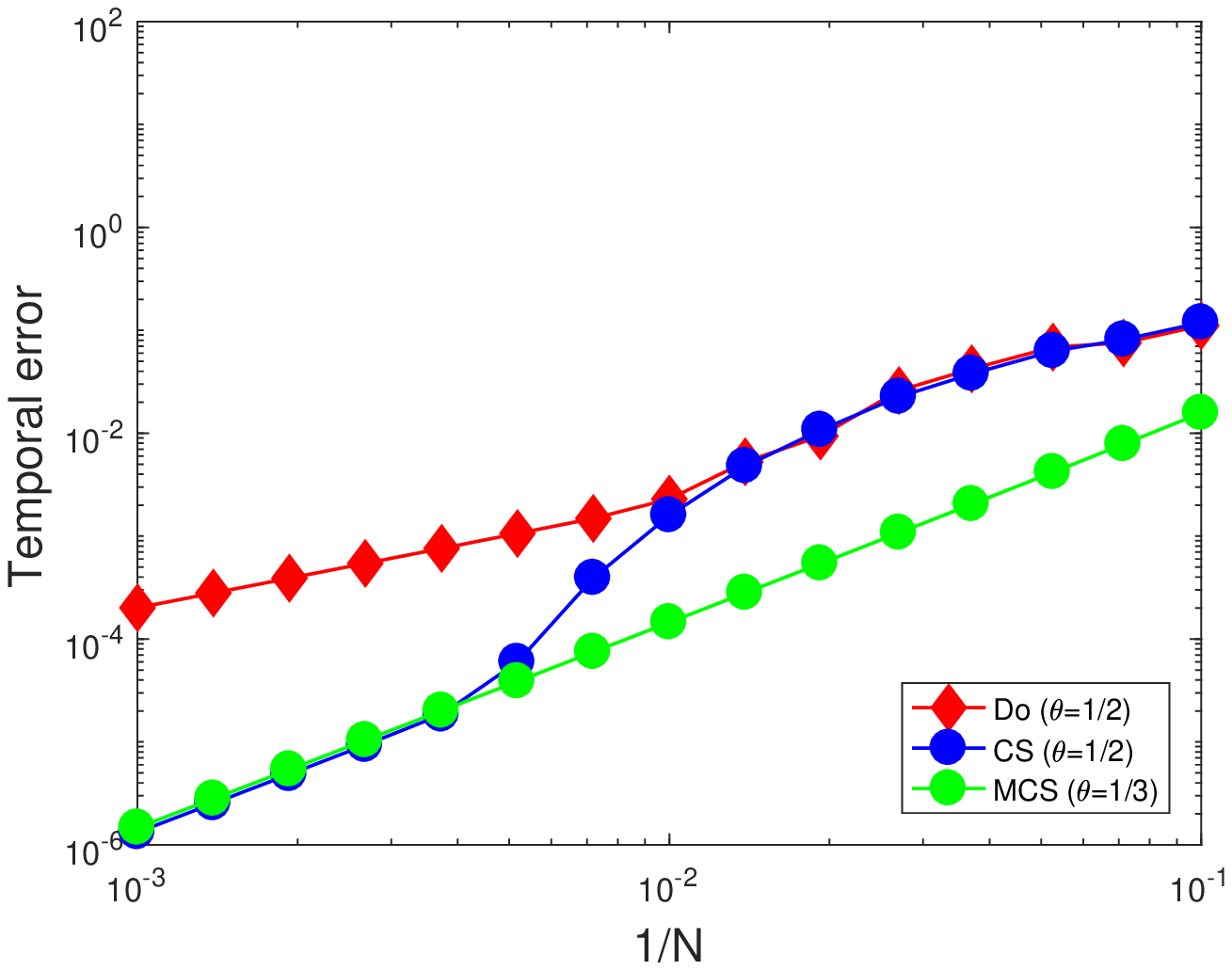}\\
         \includegraphics[width=0.5\textwidth]{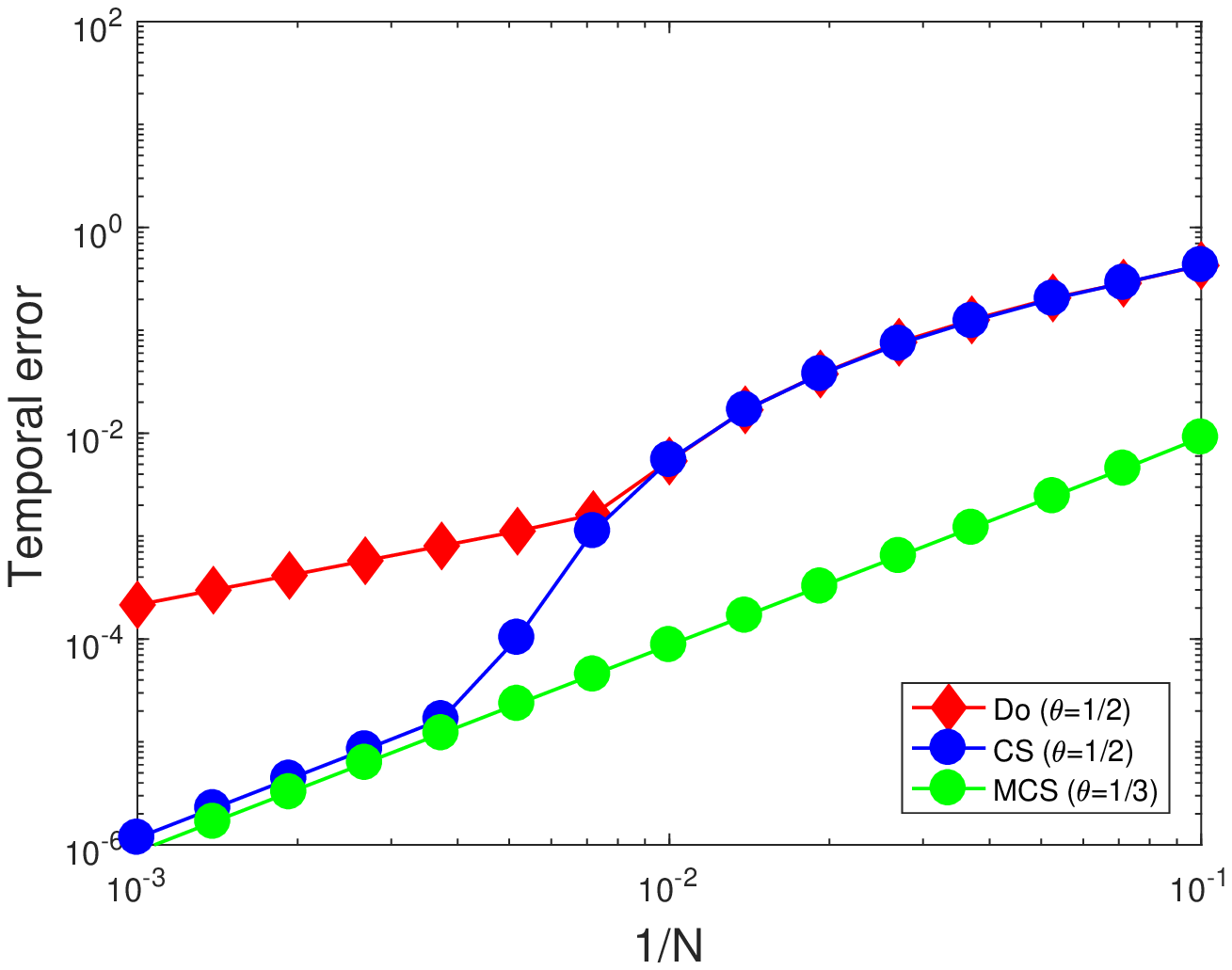}&
         \includegraphics[width=0.5\textwidth]{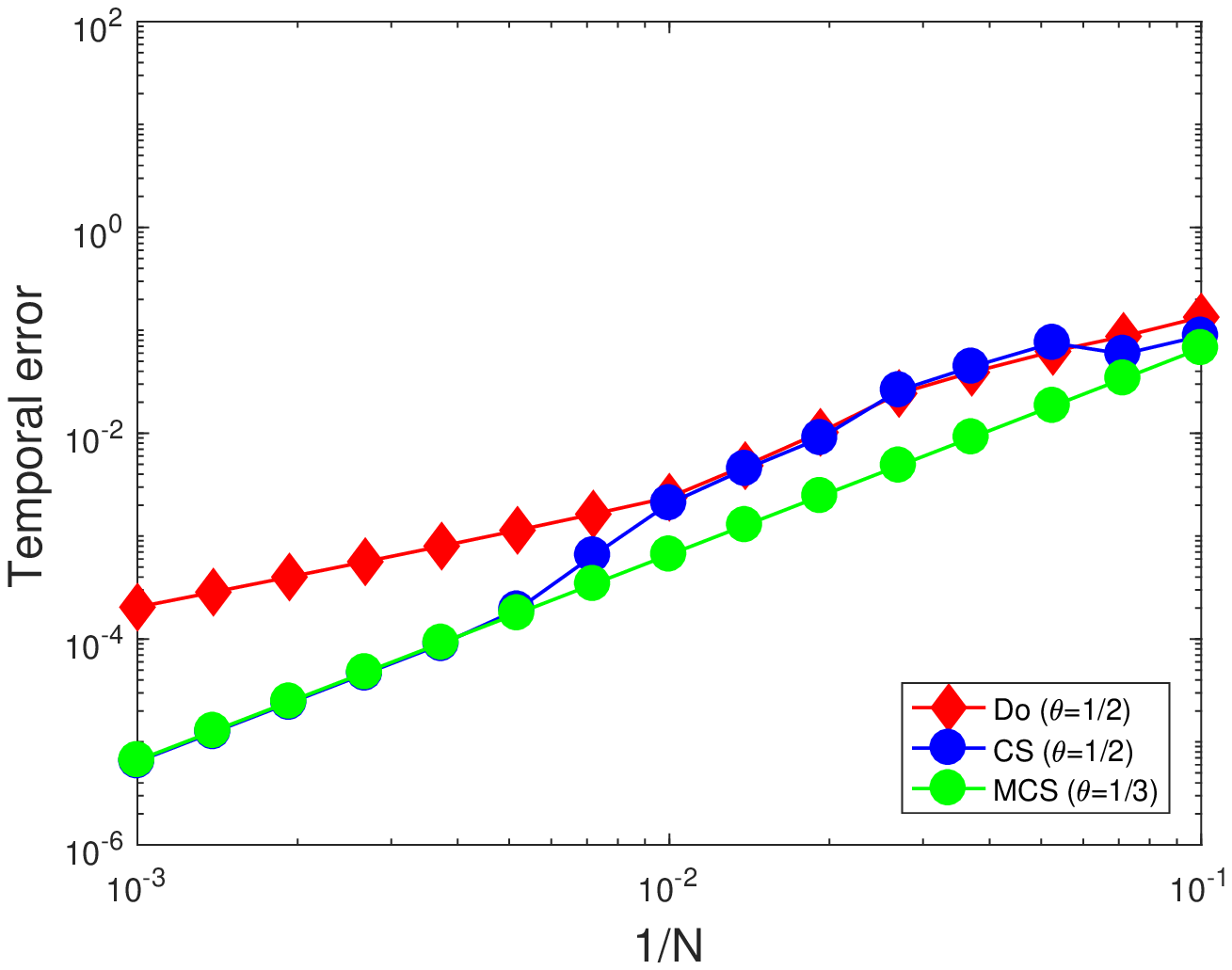}
\end{tabular}
\end{center}
\caption{Global temporal errors ${\widehat e}\,(N;200,100)$ versus $1/N$.
Case I: left column. 
Case II: right column.
Adaptation \eqref{MCS1}: top row.
Adaptation \eqref{MCS2}: middle row.
Adaptation \eqref{MCS3}: bottom row.
ADI schemes: 
MCS with $\theta=\frac{1}{3}$ (green circles),
MCS with $\theta=\frac{1}{2}$ (blue circles) and
Do with $\theta=\frac{1}{2}$ (red diamonds).
}
\label{TemporalErrors1}
\end{figure}
\clearpage

\begin{figure}[h!]
\begin{center}
\begin{tabular}{c c}
         \includegraphics[width=0.5\textwidth]{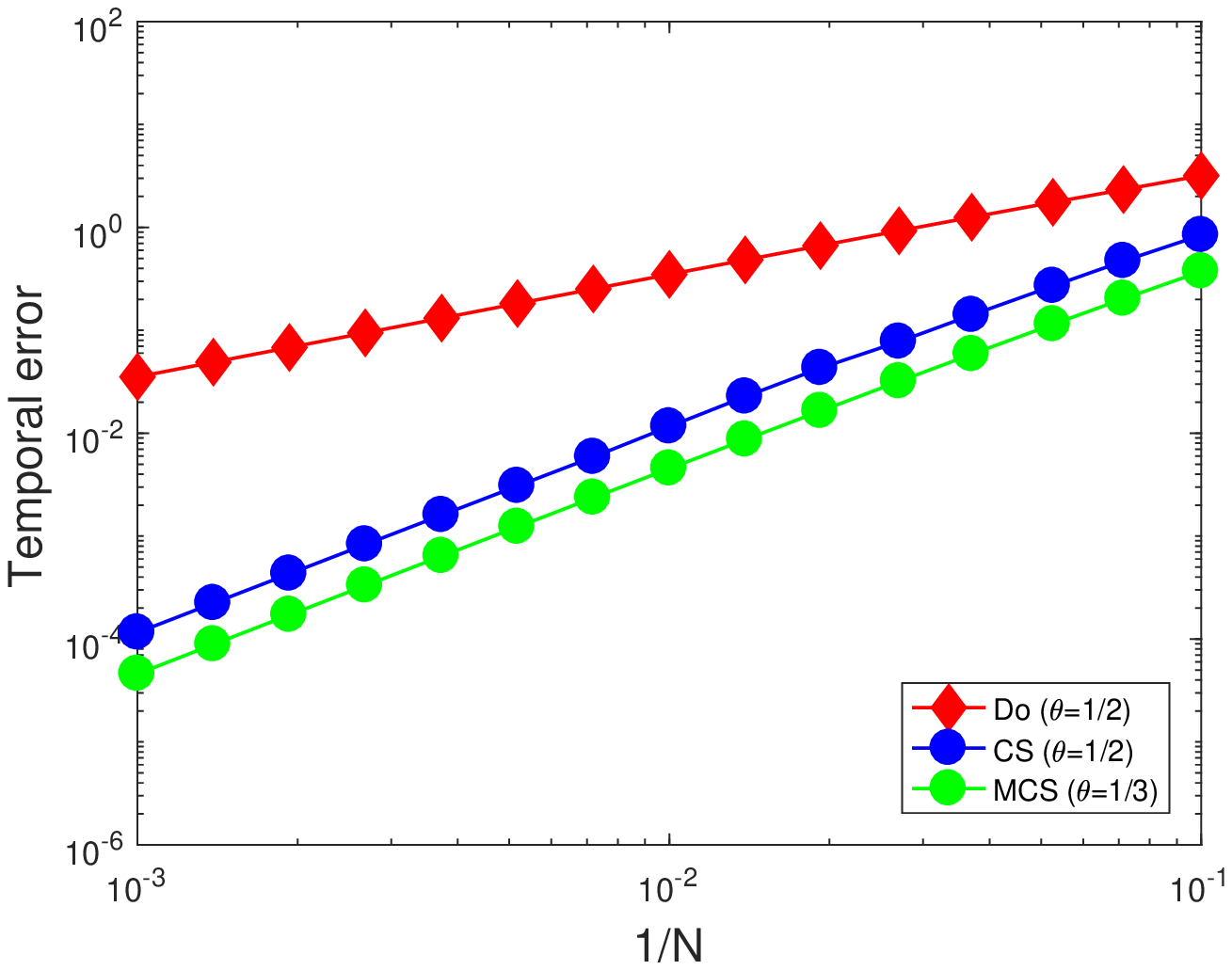}&
         \includegraphics[width=0.5\textwidth]{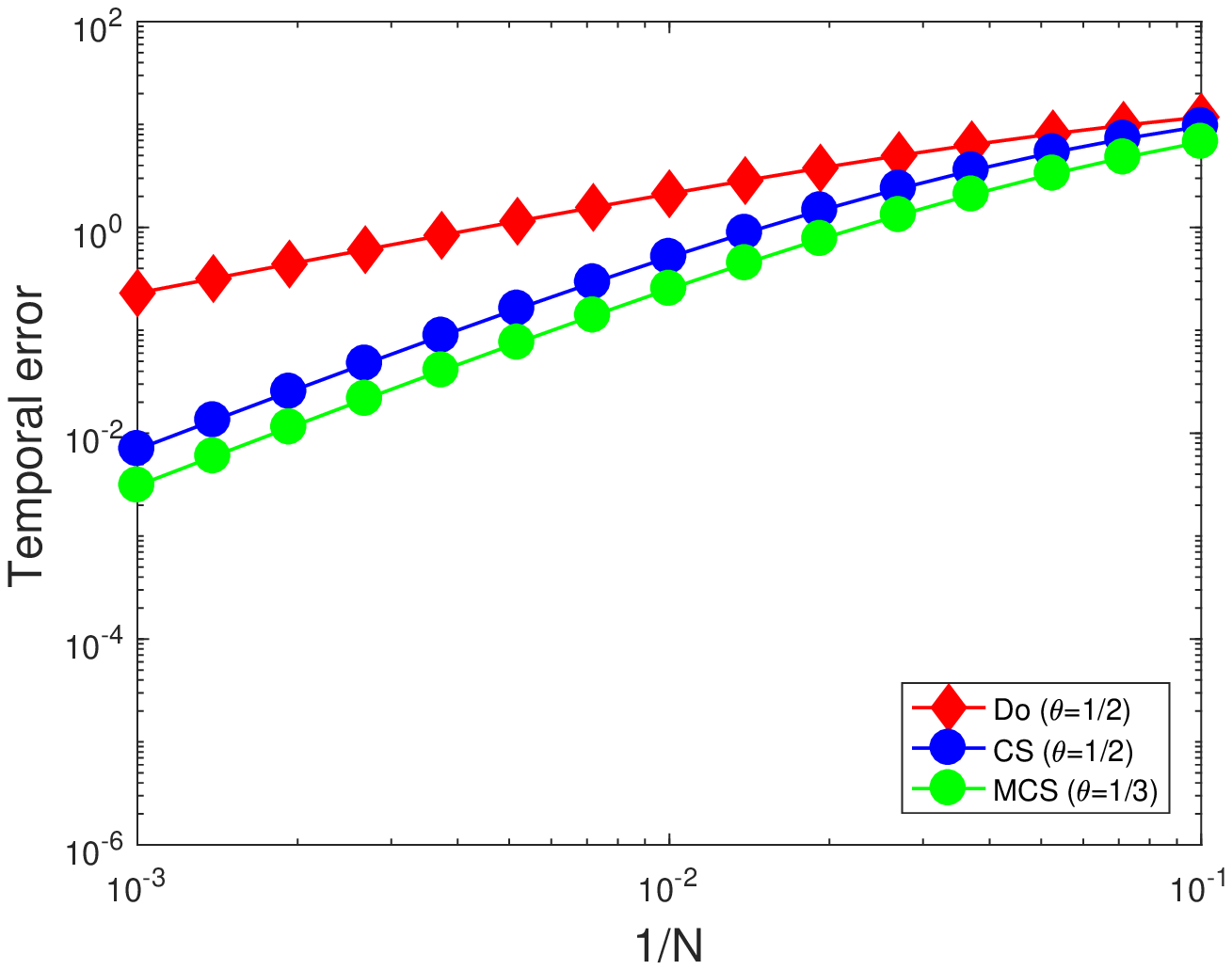}\\
         \includegraphics[width=0.5\textwidth]{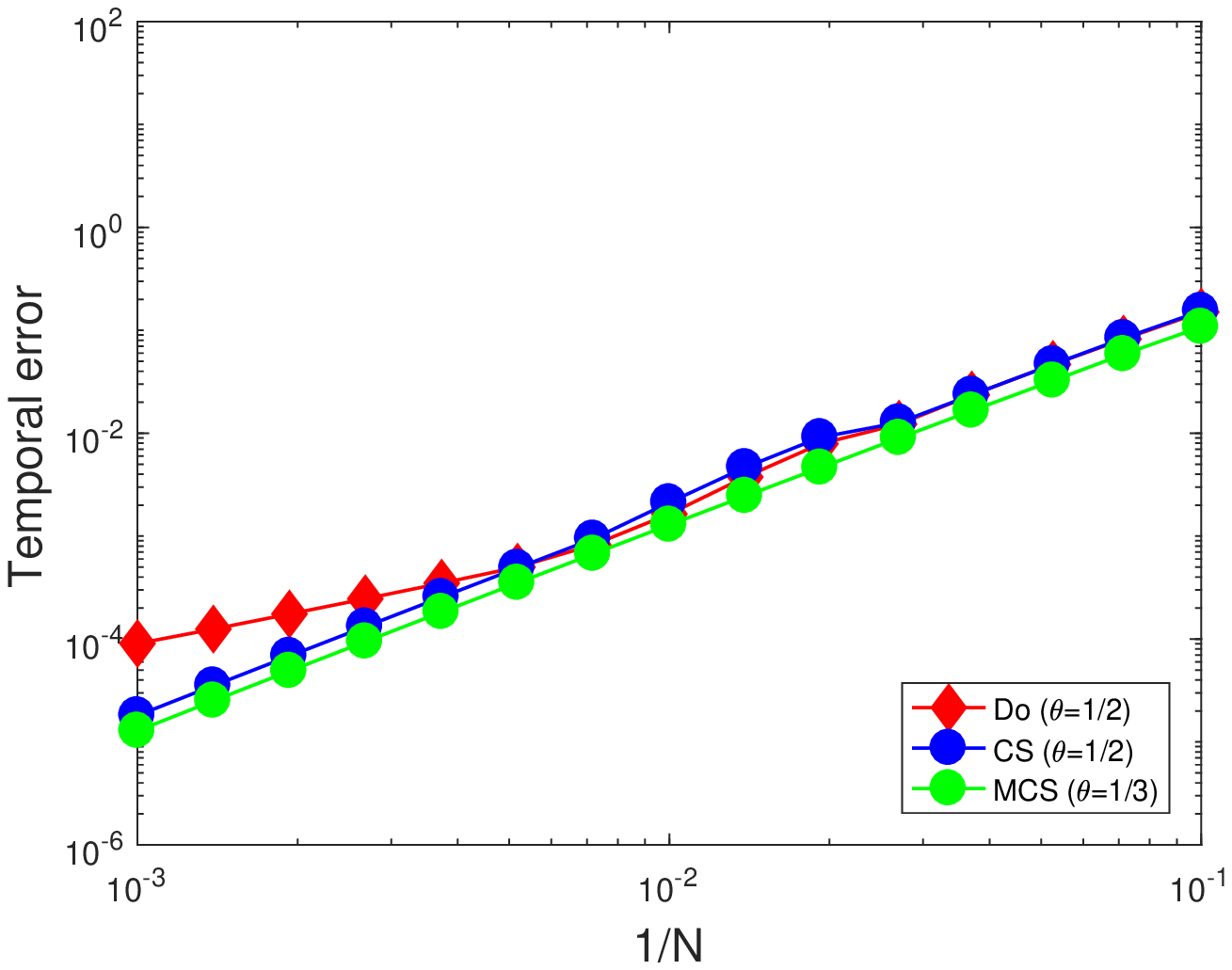}&
         \includegraphics[width=0.5\textwidth]{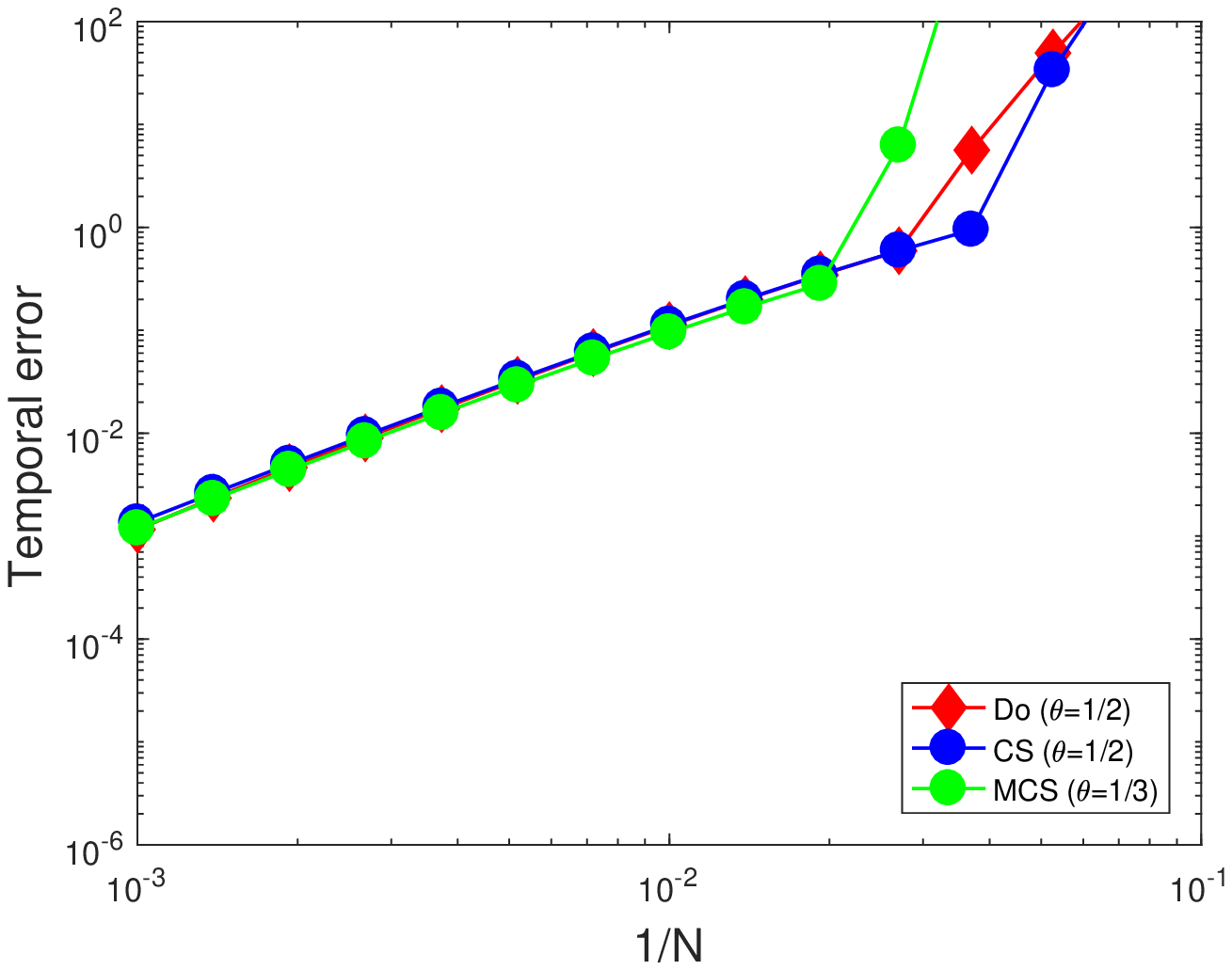}\\
         \includegraphics[width=0.5\textwidth]{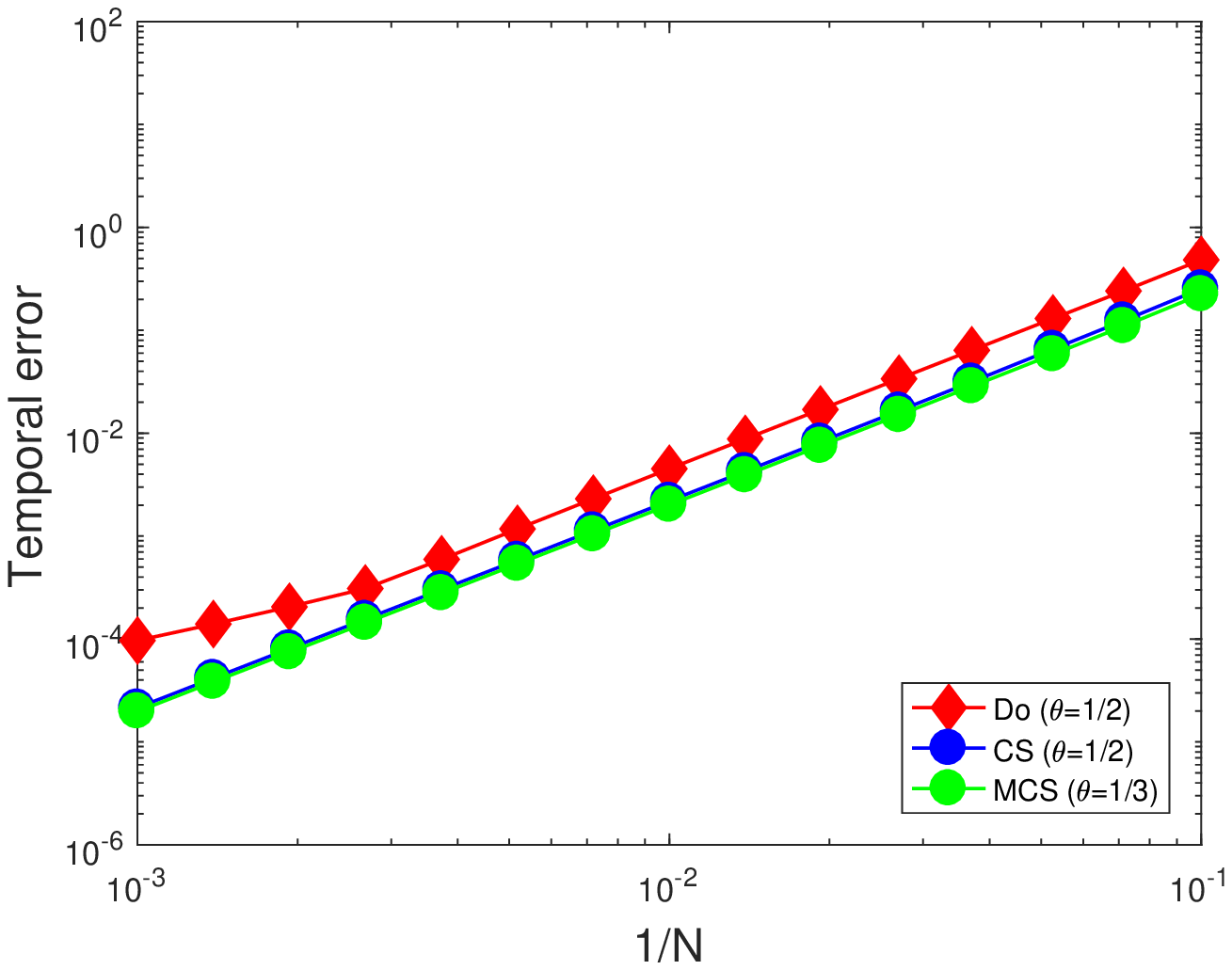}&
         \includegraphics[width=0.5\textwidth]{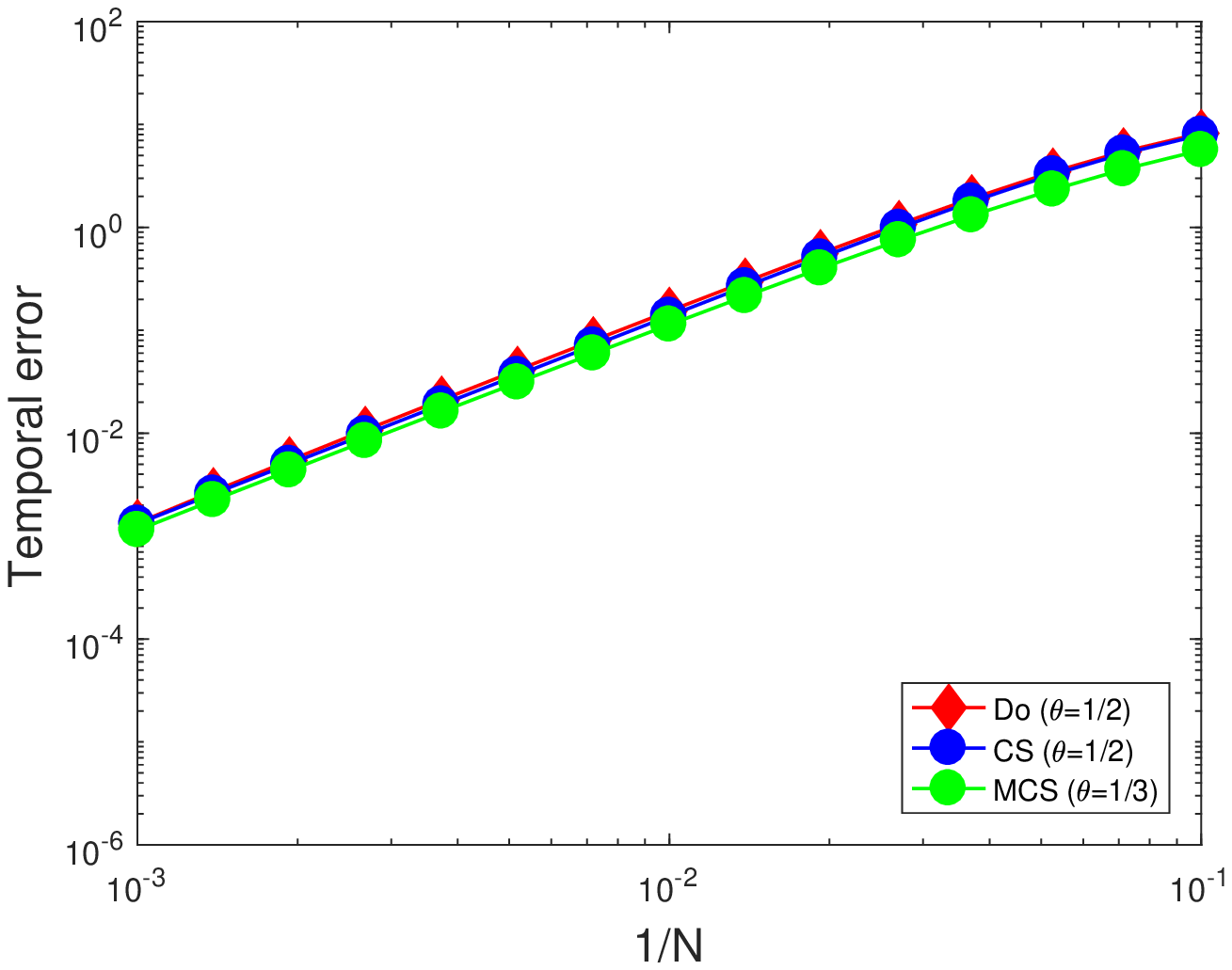}
\end{tabular}
\end{center}
\caption{Global temporal errors ${\widehat e}\,(N;200,100)$ versus $1/N$.
Case III: left column. 
Case IV: right column.
Adaptation \eqref{MCS1}: top row.
Adaptation \eqref{MCS2}: middle row.
Adaptation \eqref{MCS3}: bottom row.
ADI schemes: 
MCS with $\theta=\frac{1}{3}$ (green circles),
MCS with $\theta=\frac{1}{2}$ (blue circles) and
Do with $\theta=\frac{1}{2}$ (red diamonds).
}\label{TemporalErrors2}
\end{figure}
\clearpage

\section{Conclusion}\label{ConcSec}
In this paper we have studied three adaptations of the MCS scheme to 
two-dimensional PIDEs that are second-order consistent in the classical 
ODE sense.
They treat both the integral term and mixed derivative term explicitly 
and require per time step the solution of four linear systems with 
essentially tridiagonal matrices.
The first and second adaptations, \eqref{MCS1} and \eqref{MCS2}, handle the
integral term in a one-step fashion and employ two multiplications per time 
step with the dense matrix corresponding to this term.
The third adaptation, \eqref{MCS3}, deals with the integral term in a two-step 
fashion and uses just one multiplication per time step with the pertinent 
dense matrix.
In view of the stability results obtained in Section~\ref{ADISec} and  
numerical experiments for the Bates PIDE performed in Section~\ref{ExampleSec}, 
we recommend the third adaptation of the MCS scheme with parameter 
value $\theta=\frac{1}{3}$, 
directly followed by the first adaptation of this scheme.

\bibliographystyle{siam}
\bibliography{ADI_Bates}

\end{document}